\newcommand{\C}{\mathbb{C}}
\newcommand{\R}{\mathbb{R}}
\newcommand{\Z}{\mathbb{Z}}
\newenvironment{proof}{\noindent{\it Proof}\rm.}{\hfill $\Box$}
\theoremstyle{plain}
\newtheorem{theorem}{Theorem}[section]
\newtheorem{lemma}{Lemma} [section]
\newtheorem{exs}[thm]{Examples}
\newtheorem{proposition}[theorem]{Proposition}
\theoremstyle{definition}
\newtheorem{remarks}[theorem]{Remarks}
\newtheorem{remark}[theorem]{Remark}
\numberwithin{equation}{section}
\theoremstyle{definition}
\newtheorem{definition}{Definition}[section]
\let\frak=\mathfrak
\begin{document}
\begin{frontmatter}

\title{On sums of graph eigenvalues}
\author{Evans M. Harrell II}
\ead{harrell@math.gatech.edu}
\address{School of Mathematics, Georgia Institute of
Technology, Atlanta, GA 30332-0160 U.S.A.}
\author{Joachim Stubbe}
\ead{Joachim.Stubbe@epfl.ch}
\address{
EPFL, MATH-GEOM,
Station 8,
CH-1015 Lausanne, Switzerland
}

\maketitle

\maketitle
\begin{abstract}
We use two variational techniques
to prove upper bounds for sums of the lowest several eigenvalues of matrices
associated with finite, simple, combinatorial graphs.
These include estimates for the adjacency matrix of a graph and for
both the standard combinatorial Laplacian and the renormalized Laplacian.  We also provide upper bounds for sums of squares of eigenvalues of these three matrices.

Among our results, we generalize an inequality of Fiedler for the extreme eigenvalues of the
graph Laplacian to a bound on the sums of the smallest (or largest) $k$ such eigenvalues,
$k < n$.

Furthermore, if $\lambda_j$ are the eigenvalues of the graph Laplacian $H = - \Delta$, in increasing order,
on a finite graph with $|\mathcal{V}|$ vertices and $|\mathcal{E}|$ edges which
is isomorphic to a subgraph
of the $\nu$-dimensional infinite cubic lattice, then the spectral sums obey a
Weyl-type upper bound, a simplification of which reads

\begin{equation}\nonumber
\sum_{j=1}^{k-1}{\lambda_j} \le \frac{\pi^2 |\mathcal{E}|}{3} \left(\frac{k}{|\mathcal{V}|} \right)^{1+\frac{2}{\nu}}
\end{equation}
for each $k < |\mathcal{V}|$.

This and related estimates for $\sum_{j=1}^{k-1}{\lambda_j}^2$ provide
a family of necessary conditions for the
embeddability of the graph in a lattice of dimension $\nu$ or less.

\end{abstract}
\end{frontmatter}

\section{Introduction}

It is possible to discern some
structural features of a graph $G$ from the spectra of various matrices associated with $G$.
In practice, the most important such matrices are the adjacency matrix,
the graph Laplacian, and the renormalized Laplacian favored for example by
Chung \cite{Chu}.  (In this
article, a graph will be assumed to be finite, simple,
connected, and non-directed without further comment.  For the
definitions of these terms and other general theory,
we refer to \cite{Bol,Die})
The eigenvalues of these three matrices have been well studied
and are discussed in several monographs, especially
\cite{Chu,CvRoSi1,CvRoSi2,BiLeSt}.
The particular objects of the present study are the
(incomplete) sums of the ordered eigenvalues associated with a graph,
and related quantities such as sums of powers of eigenvalues.
We use two variational methods to obtain upper bounds on
the partial sums of eigenvalues, which 
reflect the topology of the graph and the possibility of embedding it in 
a regular lattice.  
The inequalities are for
the most part optimal in the sense that,
given a little information about the structure of the graph,
there are examples in which the inequalities are saturated.

There is a long history
in quantum physics
(e.g., \cite{LiTh,ReSiIV})
and in spectral geometry (e.g., \cite{Pol,LiYa}) 
of investigation of the
sums of the lowest $k$ eigenvalues of operators,
and relating them to the nature of the phase space or to the geometry,
but eigenvalue sums have received much less attention in the context of graph spectra.
In the main, the {\em complete} sums of eigenvalues have been
recognized as a kind of energy connected to the
structure of graphs and have been studied, for example
in \cite{Zho}.

The notational conventions of the standard references on
graph spectra are,
unfortunately, not consistent with one another.  Because of this we recall
some basic definitions to fix the notation to be used.

\begin{definition}
Given 
a graph $G$ with
$|\mathcal{V}| = n$ vertices, labeled in some fashion, the {\em adjacency matrix} $A$ has elements $a_{uv} = 1$ when vertex $u$ is connected to vertex $v$ and $0$ otherwise.  The {\em (combinatorial) graph Laplacian} is defined as
$$
\Delta := A - Deg,
$$
where $Deg$ is the diagonal matrix such that $Deg_{vv} = d_v$ is the degree of the vertex $v$, i.e., the number of edges connecting to $v$.  We prefer to express
our results in terms of $H := - \Delta$, noting that $H$ is positive
semidefinite, since
\begin{equation}\label{weakLapDef}
\left\langle{\phi, H \phi}\right\rangle
= \sum_{{\rm edges}\, [uv]}{|\phi_u - \phi_v|^2} = \frac 1 2 \sum_u \sum_v{|\phi_u - \phi_v|^2}.
\end{equation}
The null space of $H$ includes the constant vector with all entries equal to $1$, which we denote
$\bf{1}$, and is one-dimensional
(assuming that the graph is connected).

The {\em renormalized Laplacian}, cf.\ \cite{Chu}, corresponds to the matrix
$$
\hat{H} := Deg^{-\frac 1 2}HDeg^{-\frac 1 2}.
$$

Our notation for the eigenvalues of these three matrices is as follows:
\begin{align}
&\nonumber
A: \quad \alpha_0 > \alpha_1 \ge ...  \alpha_{n-1}\\
&\nonumber
H: \quad 0 = \lambda_0 < \lambda_1 \le \lambda_2 \le ...  \lambda_{n-1}\\
&\nonumber
\hat{H}: \quad 0 = \frak{c}_0 < \frak{c}_1 < \frak{c}_2 \le ...  \frak{c}_{n-1} \le 2.
\end{align}
\end{definition}
The indexing scheme ensures that in the case of a regular graph of degree $d$,
$\lambda_k = d - \alpha_k = d \frak{c}_k$ for each $k$.  When discussing an arbitrary matrix
(usually self-adjoint), we shall call it and its eigenvalues $(M, \mu_0 \dots \mu_{n-1})$.  Throughout the article, $n = |\mathcal{V}|$ is reserved for the number of vertices, and $m = |\mathcal{E}|$ designates
the number of edges.

For later purposes we recall some basic
identities relating the spectra of $H$ and $A$ to properties of the graph:
\begin{equation}\label{edgetrace}
  \text{Tr}\,(H)=\text{Tr}\,(A^2) = \sum_{v}d_v = 2 m, \quad\quad
\text{Tr}\,(H^2)= 2 m +\sum_{v}d_v^2.
\end{equation}
The topological quantity $\sum_{\mathbf{x}}d_\mathbf{x}^2$ is known as
{\em the first Zagreb index} of the graph $G$,
denoted $M_1(G)$ \cite{GuTr}.

\section{An extension of a result of Fiedler to sums of graph eigenvalues}

The usual
variational strategy for estimating the spectrum of an operator is to make
shrewd, simplifying guesses at the eigenvectors, and to use them in inequalities
deriving from the spectral theorem.
In this section we exploit the additive version of the min-max principle for the eigenvalues
$\{\mu_\ell\}_{\ell=0}^{n-1}$
of a
self-adjoint matrix $M$, {\em viz}., that
for any orthonormal set of vectors $\{\mathbf{\phi}^{(\ell)}\}_{\ell=0}^{k-1}$,
\begin{equation}\label{minmax}
\sum_{\ell=0}^{k-1}{\mu_\ell} \le \sum_{\ell=0}^{k-1}{\left\langle M \mathbf{\phi}^{(\ell)}, \mathbf{\phi}^{(\ell)}\right\rangle},
\end{equation}
and
\begin{equation}\label{maxmin}
\sum_{\ell=k}^{n-1}{\mu_\ell} \ge \sum_{\ell=k}^{n-1}{\left\langle M \mathbf{\phi}^{(\ell)}, \mathbf{\phi}^{(\ell)}\right\rangle},
\end{equation}
cf.\ \cite[\S 34]{BeBe}.  With the aid of a special basis we shall obtain sharp bounds on sums of eigenvalues,
which reduce to a result of Fiedler when there is only one summand.  In the following section we
obtain some different, competing results on sums of eigenvalues and related quantities, using a
novel
variational argument that incorporates
spectral projectors and an averaging over a family of test functions.

A good way to come up with test vectors for use in
\eqref{minmax}
is to consider special graphs on $n$ or more
vertices, with eigenvectors that are known explicitly.
Thus we consider
a graph $G_p$ which is the join of a complete graph with a
completely disconnected graph.  That is,
there are $n$ vertices of which the first $n-p$ vertices
have no edges in common, but the graph is otherwise maximally connected.
For $1\leq p \leq n-1$ the graph Laplacian $H_p$ has the form

\begin{equation}\label{H-p-patrix}
    H_{p}:=\left(
            \begin{array}{ccccccccc}
              p      & 0     & \ldots & 0      & \vline & -1 & -1 & \ldots & -1 \\
              0      & p     & \ldots & 0      & \vline & -1 & -1 & \ldots & -1 \\
              \vdots &       &        & \vdots & \vline & \vdots &  &  & \vdots \\
              0      & 0     & \ldots & p      & \vline & -1 & -1 & \ldots & -1 \\
              \hline
              -1     & -1    & \ldots & -1     & \vline & n-1 & -1 & \ldots & -1 \\
              -1     & -1    & \ldots & -1     & \vline & -1 & n-1 &  & -1 \\
              \vdots &\vdots &        & \vdots & \vline & \vdots &  &  & \vdots \\
              -1     & -1    & \ldots & -1     & \vline & -1 & -1 & \ldots & n-1 \\
            \end{array}
          \right).
\end{equation}

\begin{rem}
In particular, $H_1$ is the Laplacian of a star graph, while $H_{n-1}$ is the Laplacian
of a complete graph.
For future purposes we observe that
\begin{equation}\label{traces-H-p}
     \text{tr}(H_p)=p(2n-p-1),\quad \text{tr}(H_p^2)=p(n^2+pn-p^2-p).
\end{equation}
\end{rem}

Building a variational estimate for an arbitrary graph
from the eigenvectors of this family of graphs
leads to an extension of the result of Fiedler \cite{Fie},
as we next demonstrate.  Letting
$\mathbf{e}_j,j=1\ldots,n$ denote the canonical orthonormal basis vectors of $\mathbb{R}^n$, we
construct a {\em reduced basis} of eigenfunctions as follows.

\begin{proposition} [Spectral analysis of $H_p$]
Let $\displaystyle \mathbf{\epsilon}^{(0)}  := \frac{{\bf{1}}}{\sqrt{n}}
:= \frac{1}{\sqrt{n}} \sum_{j=1}^n{\mathbf{e}_j}$ and
for $\ell=1\ldots, n-1$, define
\begin{equation}\label{on-basis}
   \mathbf{\epsilon}^{(\ell)}:=\frac1{\sqrt{\ell(\ell+1)}}\big(\ell\mathbf{e}_{\ell+1}-\sum_{j=1}^{\ell}\mathbf{e}_j\big),
\end{equation}
noting that
$\{\mathbf{\epsilon}^{(\ell)}\}$ is an orthonormal basis of $\mathbb{R}^n$.
For each $\ell=1,\ldots,n-p-1$, $\mathbf{\epsilon}^{(\ell)}$ is an eigenvector of $H_p$ with corresponding eigenvalue $p$, and for each $\ell=n-p,\ldots,n-1$, $\mathbf{\epsilon}^{(\ell)}$ is an eigenvector of $H_p$ with corresponding eigenvalue $n$.  (Trivially, $\mathbf{\epsilon}^{(0)}$ is the normalized eigenvector of $H_p$ with
eigenvalue $0$.)
\end{proposition}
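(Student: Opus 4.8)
The plan is to verify the stated spectral decomposition by direct computation, leaning on two structural observations: the natural $(n-p)+p$ block splitting of $H_p$ into the ``disconnected'' coordinates $1,\ldots,n-p$ and the ``complete'' coordinates $n-p+1,\ldots,n$, and the fact that every $\mathbf{\epsilon}^{(\ell)}$ with $\ell\ge 1$ has vanishing coordinate sum, i.e.\ $\langle\mathbf{\epsilon}^{(\ell)},\mathbf{1}\rangle=0$. I would first dispose of orthonormality. A one-line computation gives $\|\mathbf{\epsilon}^{(\ell)}\|^2=\frac{1}{\ell(\ell+1)}(\ell^2+\ell)=1$, and the identity $\langle\mathbf{\epsilon}^{(\ell)},\mathbf{1}\rangle=\frac{1}{\sqrt{\ell(\ell+1)}}(\ell-\ell)=0$ is immediate. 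For orthogonality of $\mathbf{\epsilon}^{(\ell)}$ and $\mathbf{\epsilon}^{(\ell')}$ with $\ell<\ell'$, I would observe that $\mathbf{\epsilon}^{(\ell)}$ is supported on coordinates $1,\ldots,\ell+1\le\ell'$, precisely the range on which $\mathbf{\epsilon}^{(\ell')}$ is a constant multiple of $\mathbf{1}$; hence the inner product reduces to a multiple of the coordinate sum of $\mathbf{\epsilon}^{(\ell)}$, which vanishes. Since these are $n$ mutually orthonormal vectors in $\mathbb{R}^n$, they form a basis.

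For the eigenvalue $p$, fix $1\le\ell\le n-p-1$. Then $\mathbf{\epsilon}^{(\ell)}$ is supported entirely within the disconnected block, where the diagonal part of $H_p$ acts as $pI$, while the off-diagonal coupling to the complete block produces, in each complete-block entry of the output, a multiple of the coordinate sum of $\mathbf{\epsilon}^{(\ell)}$. Because that sum is zero, the coupling contributes nothing, and $H_p\mathbf{\epsilon}^{(\ell)}=p\,\mathbf{\epsilon}^{(\ell)}$ follows at once.

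The eigenvalue $n$ is the one step that requires genuine care, and I expect it to be the main obstacle. For $n-p\le\ell\le n-1$ the vector $\mathbf{\epsilon}^{(\ell)}$ now straddles both blocks: its restriction $\mathbf{a}$ to the disconnected coordinates is the constant vector $-\frac{1}{\sqrt{\ell(\ell+1)}}\mathbf{1}_{n-p}$, while its restriction to the complete block is some vector $\mathbf{b}$. Writing $H_p\mathbf{\epsilon}^{(\ell)}$ block by block, the block-diagonal part contributes $p\,\mathbf{a}$ in the top block and $(nI_p-J_p)\mathbf{b}$ in the bottom block, while the two off-diagonal couplings produce terms proportional to the coordinate sums of $\mathbf{b}$ and of $\mathbf{a}$ respectively. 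The zero-total-sum property forces these two coordinate sums to be negatives of one another, and this is exactly what is needed: in the bottom block the $(nI_p-J_p)$-term and the coupling term combine so that the spurious $\mathbf{1}$-direction contributions cancel, leaving $n\,\mathbf{b}$; in the top block the coupling term upgrades the coefficient from $p$ to $p+(n-p)=n$, leaving $n\,\mathbf{a}$. Carrying out this cancellation yields $H_p\mathbf{\epsilon}^{(\ell)}=n\,\mathbf{\epsilon}^{(\ell)}$.

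Finally, $\mathbf{\epsilon}^{(0)}\propto\mathbf{1}$ lies in the kernel because $H_p$, being a graph Laplacian, has vanishing row sums. A multiplicity count, $1+(n-p-1)+p=n$, which is moreover consistent with $\mathrm{tr}\,H_p=p(2n-p-1)$ recorded in \eqref{traces-H-p}, confirms that the list is complete and no eigenvector has been overlooked.
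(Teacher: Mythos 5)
Your proof is correct, and it is essentially the argument the paper has in mind: the paper simply states that ``the proposition can be verified directly'' and relegates the eigenspace data to Appendix A, where $H_p$ is written in the same block form $\bigl(\begin{smallmatrix} pI_{n-p} & -J_{n-p,p} \\ -J_{p,n-p} & nI_p-J_{p,p}\end{smallmatrix}\bigr)$ that your verification exploits. Your block-by-block computation, using the zero coordinate sum of each $\mathbf{\epsilon}^{(\ell)}$ to kill the $J$-couplings, correctly fills in the details the authors omit.
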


The proposition can be verified directly.   More details about the
spectral analysis of the graphs $G_p$ are collected in Appendix A.

\textbf{Matrix elements of a general graph in the reduced basis $\{\mathbf{\epsilon}^{(\ell)}\}$}.
We first compute the matrix elements of a general self-adjoint matrix
$M$ with respect to
$\mathbf{\epsilon}^{(\ell)}$, $\ell=1,\ldots, n-1$. Let $\displaystyle C_{j,\ell}=\frac1{\sqrt{j(j+1)\ell(\ell+1)}}$. Then
\begin{equation}\label{H-matrix-elements}
\begin{split}
    C_{j,\ell}^{-1}\langle\mathbf{\epsilon}^{(j)},M\mathbf{\epsilon}^{\ell}\rangle&=j \ell\,m_{j+1,\ell+1}+
    \sum_{\alpha=1}^,\sum_{\beta=1}^\ell m_{\alpha\beta}\\
    &\quad -j\sum_{\beta=1}^\ell m_{\beta,j+1}-\ell\sum_{\alpha=1}^jm_{\alpha,\ell+1}.\\
    \end{split}
\end{equation}
In particular,
\begin{equation}\label{H-matrix-diagonal}
   \langle\mathbf{\epsilon}^{(\ell)},M\mathbf{\epsilon}^{(\ell)}\rangle=\frac1{\ell(\ell+1)}\bigg(\ell^2\,m_{\ell+1,\ell+1}+\sum_{\alpha=1}^\ell\sum_{\beta=1}^\ell m_{\alpha\beta}-2\ell \sum_{\alpha=1}^\ell m_{\alpha,\ell+1}\bigg).
\end{equation}
If we now specialize so that $M = H$, a graph Laplacian, then
\begin{equation*}
    \langle\mathbf{\epsilon}^{(1)},H\mathbf{\epsilon}^{(1)}\rangle
    = \frac{1}{2} \left(d_1+d_2\right) - a_{1 2},
\end{equation*}
or, using the fact that the sum over rows of $H$ is equal to zero,
\begin{equation}
    \langle\mathbf{\epsilon}^{(n-2)},H\mathbf{\epsilon}^{(n-2)}\rangle=\frac{(n-1)^2d_{n-1}-2(n-1)a_{n\,n-1}+d_{n}}{(n-2)(n-1)},
\end{equation}
\begin{equation}\label{exval2.9}
    \langle\mathbf{\epsilon}^{(n-1)},H\mathbf{\epsilon}^{(n-1)}\rangle=\frac{n}{n-1}\,d_{n}.
\end{equation}
Similarly, we compute
\begin{equation}
    \langle H\mathbf{\epsilon}^{(n-1)},H\mathbf{\epsilon}^{(n-1)}\rangle
=\frac{n}{n-1}\,\big(d_n^2+d_n\big).
\end{equation}
If the diagonal elements of $H$
are arranged in decreasing order, then
applying the variational principle \eqref{minmax}
to \eqref{exval2.9}
immediately yields an alternative proof of a result of Fiedler \cite{Fie}:
\begin{prop} [Fiedler]  \label{Fiedler}
For the graph Laplacian,
\begin{equation}\label{ritz-bound}
    \lambda_1\leq \frac{n}{n-1}\,\underset{v}{\min}\,d_{v},\quad \frac{n}{n-1}\,\underset{v}{\max}\,d_{v}\leq \lambda_{n-1},
\end{equation}
with equality for the complete graph and the star graph.
\end{prop}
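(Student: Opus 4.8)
The plan is to obtain both inequalities from the one-summand version of the variational principle, using only the single reduced-basis vector $\mathbf{\epsilon}^{(n-1)}$ together with the freedom to relabel vertices.

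For the upper bound on $\lambda_1$, I would apply \eqref{minmax} with $k=2$ to the orthonormal pair $\{\mathbf{\epsilon}^{(0)},\mathbf{\epsilon}^{(n-1)}\}$. Since $\mathbf{\epsilon}^{(0)}=\mathbf{1}/\sqrt{n}$ lies in the null space of $H$, the term $\langle\mathbf{\epsilon}^{(0)},H\mathbf{\epsilon}^{(0)}\rangle$ vanishes and $\lambda_0=0$, so \eqref{minmax} collapses to $\lambda_1\le\langle\mathbf{\epsilon}^{(n-1)},H\mathbf{\epsilon}^{(n-1)}\rangle$. For the lower bound on $\lambda_{n-1}$, I would instead apply \eqref{maxmin} with $k=n-1$ to the single vector $\mathbf{\epsilon}^{(n-1)}$, giving $\lambda_{n-1}\ge\langle\mathbf{\epsilon}^{(n-1)},H\mathbf{\epsilon}^{(n-1)}\rangle$. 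In both cases the matrix element on the right is exactly the quantity $\frac{n}{n-1}d_n$ already evaluated in \eqref{exval2.9}.

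The one step that carries real content is the observation that \eqref{exval2.9} refers only to the vertex that happens to bear the label $n$, while the labeling is entirely arbitrary. Relabeling so that a prescribed vertex $v$ is listed last, I would obtain $\lambda_1\le\frac{n}{n-1}d_v$ and $\frac{n}{n-1}d_v\le\lambda_{n-1}$ simultaneously for every $v$. Taking the minimum over $v$ in the first chain and the maximum in the second produces \eqref{ritz-bound}.

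It remains to check sharpness, which I would do by direct inspection of the two extreme members of the family $H_p$. For the complete graph ($p=n-1$) every degree equals $n-1$ and, by the Proposition, every nonzero eigenvalue equals $n=\frac{n}{n-1}(n-1)$, so both inequalities are equalities. For the star graph ($p=1$) the Proposition gives the spectrum $0,1,\dots,1,n$; here $\max_v d_v=n-1$ and $\lambda_{n-1}=n$, saturating the second bound. I do not anticipate any serious obstacle: the work is essentially the bookkeeping in \eqref{exval2.9}, already carried out, plus the relabeling remark, which is precisely why the statement follows immediately from the variational principle.
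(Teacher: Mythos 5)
Your proposal is correct and follows essentially the same route as the paper: both rest on the single test vector $\mathbf{\epsilon}^{(n-1)}$, whose Rayleigh quotient $\frac{n}{n-1}d_n$ is computed in \eqref{exval2.9}, combined with the freedom to relabel vertices (the paper phrases this as ordering the diagonal entries of $H$) and the one-vector cases of \eqref{minmax} and \eqref{maxmin}. Your sharpness check is also accurate, and slightly more careful than the paper's wording in noting that the star graph saturates only the second inequality while the complete graph saturates both.
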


We are now ready to extend
Proposition \ref{Fiedler} to sums of ordered eigenvalues.  Applying the min-max
principle for sums of eigenvalues \eqref{minmax}, choosing $\mathbf{\phi}^{({\ell})} = \mathbf{\epsilon}^{(\ell)}$
and using the fact that we may relabel vertices, we get the following.
\begin{prop} \label{Fiedler-extended}
The partial sums of the
eigenvalues of the graph Laplacian satisfy the
following inequalities.
\begin{equation}\label{ritz-bound-sums}
\begin{split}
    &\lambda_1+\lambda_2\leq \frac{n-1}{n-2}\,\underset{u\neq v}{\min}\,\big(d_u+d_v
    -\frac{2a_{uv}}{n-1}\big),\\
 &\frac{n-1}{n-2}\,\underset{u \neq v}{\max}\,\big(d_u+d_v-\frac{2a_{uv}}{n-1}\big)\leq \lambda_{n-2}+\lambda_{n-1}\\
\end{split}
\end{equation}
with equality for the complete graph and the star graph.  Moreover
(by averaging over $u$),
\begin{equation}\label{ritz-bound-sums2}
\begin{split}
    &\lambda_1+\lambda_2\leq \frac{2 m}{n-2}+ \frac{n(n-3)}{(n-1)(n-2)}\,\underset{v}{\min}\,d_v,\\
 &\frac{2 m}{n-2}+ \frac{n(n-3)}{(n-1)(n-2)}\,\underset{v}{\max}\,d_v \leq \lambda_{n-2}+\lambda_{n-1}.\\
\end{split}
\end{equation}
For any $L=1,\ldots,n-1$ we get
\begin{equation}\label{ritz-bound-sums-L}
\begin{split}
    &\sum_{i=1}^L\lambda_i\leq \frac{L}{L+1}\;\sum_{x=1}^{L+1}d_x+\frac{1}{L+1}\,\;\underset{v \neq u}{\sum_{u=1}^{L+1}\sum_{v =1}^{L+1}}a_{u v }\leq \sum_{j=n-L+1}^n\lambda_j\\
 &\sum_{i=1}^L\lambda_i\leq \frac{n-L+1}{n-L}\;\sum_{x=n-L+1}^{n}d_x+\frac{1}{n-L}\,\;\underset{v \neq u}{\sum_{u=n-L+1}^{n}\sum_{v =n-L+1}^{n}}a_{u v }\leq \sum_{j=n-L+1}^n\lambda_j.\\
\end{split}
\end{equation}
\end{prop}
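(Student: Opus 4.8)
The plan is to reduce all of \eqref{ritz-bound-sums}--\eqref{ritz-bound-sums-L} to a single combinatorial identity for the Ritz sum $\sum_{\ell=1}^{L}\langle H\mathbf{\epsilon}^{(\ell)},\mathbf{\epsilon}^{(\ell)}\rangle$ evaluated in the reduced basis, and then to feed this identity into the two variational principles \eqref{minmax} and \eqref{maxmin}. The key structural remark is that $\mathbf{\epsilon}^{(0)}=\mathbf{1}/\sqrt n$ is the \emph{exact} eigenvector for $\lambda_0=0$, so $\langle H\mathbf{\epsilon}^{(0)},\mathbf{\epsilon}^{(0)}\rangle=0$; adjoining $\mathbf{\epsilon}^{(0)}$ to any $L$ of the remaining $\mathbf{\epsilon}^{(\ell)}$ gives an admissible orthonormal $(L+1)$-frame whose $\ell=0$ contribution drops out of both sides of \eqref{minmax}.

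The identity I would establish is
\[
\sum_{\ell=1}^{L}\langle H\mathbf{\epsilon}^{(\ell)},\mathbf{\epsilon}^{(\ell)}\rangle
= \frac{L}{L+1}\sum_{x=1}^{L+1}d_x
+ \frac{1}{L+1}\sum_{\substack{u,v=1\\ u\neq v}}^{L+1}a_{uv}.
\]
Rather than summing \eqref{H-matrix-diagonal} term by term, I would argue by projection. Each $\mathbf{\epsilon}^{(\ell)}$ with $1\le \ell\le L$ lies in $V:=\mathrm{span}\{\mathbf{e}_1,\ldots,\mathbf{e}_{L+1}\}$ and is orthogonal to $\mathbf{1}$; since these are $L$ orthonormal vectors and $\{w\in V:\langle w,\mathbf{1}\rangle=0\}$ is $L$-dimensional, they form an orthonormal basis of that subspace. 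Completing it with $\mathbf{u}:=(L+1)^{-1/2}\sum_{j=1}^{L+1}\mathbf{e}_j$ gives an orthonormal basis of $V$, whence the Ritz sum equals $\mathrm{Tr}(P_V H P_V)-\langle H\mathbf{u},\mathbf{u}\rangle=\sum_{x=1}^{L+1}d_x-\langle H\mathbf{u},\mathbf{u}\rangle$; evaluating the rank-one correction with the diagonal entries $d_x$ and off-diagonal entries $-a_{uv}$ of $H$ produces the formula.

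With the identity in hand, \eqref{minmax} applied to $\{\mathbf{\epsilon}^{(0)},\mathbf{\epsilon}^{(1)},\ldots,\mathbf{\epsilon}^{(L)}\}$ gives the upper bound in the first line of \eqref{ritz-bound-sums-L}, while \eqref{maxmin} applied to $\{\mathbf{\epsilon}^{(1)},\ldots,\mathbf{\epsilon}^{(L)}\}$ as a top-$L$ frame gives the matching lower bound on $\sum_{j=n-L+1}^{n}\lambda_j$; freedom to relabel vertices lets the active set of $L+1$ vertices be chosen arbitrarily. The second line comes from the complementary choice of the ``top'' vectors $\{\mathbf{\epsilon}^{(n-L)},\ldots,\mathbf{\epsilon}^{(n-1)}\}$: I would compute their Ritz sum as $\mathrm{Tr}(H)-\sum_{\ell=0}^{n-L-1}\langle H\mathbf{\epsilon}^{(\ell)},\mathbf{\epsilon}^{(\ell)}\rangle=2m-\sum_{\ell=1}^{n-L-1}\langle H\mathbf{\epsilon}^{(\ell)},\mathbf{\epsilon}^{(\ell)}\rangle$, insert the identity with $L\mapsto n-L-1$, and then use $H\mathbf{1}=0$ to rewrite $\langle H\mathbf{1}_{n-L},\mathbf{1}_{n-L}\rangle$ as the corresponding quantity supported on the last $L$ vertices.

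Finally, \eqref{ritz-bound-sums} is the case $L=2$ of the top-vector bound: the pair $\mathbf{\epsilon}^{(n-2)},\mathbf{\epsilon}^{(n-1)}$ collapses, again via the vanishing row sums of $H$, to an expression in the single pair $\{n-1,n\}$, and relabeling together with a choice of the extremal pair yields the $\min$ and $\max$. Inequalities \eqref{ritz-bound-sums2} then follow by fixing one vertex $v$ and averaging the pairwise bound over the other vertex $u$, using $\sum_{u\neq v}d_u=2m-d_v$ and $\sum_{u\neq v}a_{uv}=d_v$, after which one optimizes over $v$. Equality for the complete and star graphs is verified directly, the point being that for these graphs the reduced basis consists of genuine eigenvectors (cf.\ the spectral analysis of $H_p$). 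I expect the only real work to be the combinatorial identity; the projection viewpoint reduces it to one rank-one computation, and the chief bookkeeping hazard is keeping the sign and normalization of the off-diagonal $a_{uv}$ terms straight when passing between a vertex set and its complement.
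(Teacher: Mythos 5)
Your proposal is correct and, at bottom, follows the same route as the paper: test the variational principles \eqref{minmax} and \eqref{maxmin} on the reduced basis $\{\mathbf{\epsilon}^{(\ell)}\}$, relabel vertices to extremize, and average over $u$ to pass from \eqref{ritz-bound-sums} to \eqref{ritz-bound-sums2}. The one genuine difference is how you evaluate the Ritz sums: the paper sums the diagonal matrix elements \eqref{H-matrix-diagonal} term by term, while you observe that $\mathbf{\epsilon}^{(1)},\dots,\mathbf{\epsilon}^{(L)}$ span the mean-zero subspace of $V=\mathrm{span}\{\mathbf{e}_1,\dots,\mathbf{e}_{L+1}\}$ and compute
\begin{equation*}
\sum_{\ell=1}^{L}\langle H\mathbf{\epsilon}^{(\ell)},\mathbf{\epsilon}^{(\ell)}\rangle
=\mathrm{Tr}(P_VHP_V)-\langle H\mathbf{u},\mathbf{u}\rangle
=\frac{L}{L+1}\sum_{x=1}^{L+1}d_x+\frac{1}{L+1}\sum_{\substack{u,v\le L+1\\ u\neq v}}a_{uv},
\end{equation*}
which is a single rank-one correction to a trace; this is cleaner, makes the complementary (top-frame) computation a one-line telescoping via $\mathrm{Tr}(H)=2m$, and I verified it reproduces the paper's special cases such as \eqref{exval2.9}. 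One substantive warning: if you carry your complement calculation to the end, using $D_1=A_{11}+A_{12}$ and $D_2=A_{22}+A_{12}$ in the obvious notation, you will obtain
\begin{equation*}
\sum_{\ell=n-L}^{n-1}\langle H\mathbf{\epsilon}^{(\ell)},\mathbf{\epsilon}^{(\ell)}\rangle
=\frac{n-L+1}{n-L}\sum_{x=n-L+1}^{n}d_x\;-\;\frac{1}{n-L}\sum_{\substack{u,v\ge n-L+1\\ u\neq v}}a_{uv},
\end{equation*}
with a \emph{minus} sign on the adjacency term, whereas the second line of \eqref{ritz-bound-sums-L} as printed has a plus sign. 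The minus sign is the correct one: it is forced by consistency with the $L=2$ case \eqref{ritz-bound-sums}, and with the plus sign the asserted lower bound on $\sum_{j}\lambda_j$ already fails for the complete graph when $L\ge 2$. So the discrepancy you will encounter is a typo in the statement, not an error in your argument. Beyond that, only routine details remain to be written out (the dimension count showing the $\mathbf{\epsilon}^{(\ell)}$ exhaust the mean-zero part of $V$, and the check that \eqref{maxmin} is being applied to a frame of exactly $L$ vectors).
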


\section{An averaged variational principle and consequences for spectral sums}

In \cite{Kro} P. Kr\"oger
proved an upper bound for sums of eigenvalues of a vibrating free membrane (i.e. the Neumann Laplacian) on a
bounded domain.  Kr\"oger's
bound is sharp in the sense of 
having the same dependence on dimension as the classic asymptotic estimate 
of large eigenvalues 
due to Weyl.
Although as presented in \cite{Kro} the bound appears to rely on special properties of the Laplacian and of the Fourier transform, in our view the essence of the argument was that it averaged different parts
of a variational estimate in different ways, one of which simplified some coefficients.  We shall formulate an abstract version of the spectral estimate of \cite{Kro}
and apply it to two situations, in one of which the graph
is assumed to be a finite subset of the lattice $\Z^\nu$ equipped with nearest-neighbor edges,
which we term the
{\em cubic lattice graph} $\frak{Q}^\nu$.
Under this assumption, we obtain an analogue for graphs
of what Kr\"oger proved for the Neumann problem on a compact $\Omega \subset \R^\nu$,
and in particular we obtain an upper bound with
Weyl dependence on dimension.  The second situation is more
generic, and applies to an arbitrary graph
on $n$ vertices.

Suppose that $M$ is a self-adjoint operator on a Hilbert space $\mathcal{H}$,
with discrete eigenvalues
$-\infty < \mu_0 \le \mu_1 \le \dots$.
Let $P_k$ be the spectral projector associated to the eigenvalues $0$ through $k$, and let
$f$ be in the quadratic-form domain
$\mathcal{Q}(M) \subset \mathcal{H}$.
(We reassure the reader that in this article all operators will be bounded matrices,
in which case 
domain technicalities are avoided, as
$\mathcal{Q}(M)$ coincides with $\mathcal{H}$, and indeed
$\mathcal{H}$ will merely
be $\C^n$.)

By the variational principle \eqref{minmax},
\begin{equation}\label{var-principle}
 \mu_{k}\big(\langle f,f\rangle-\langle P_{k-1} f, P_{k-1} f\rangle\big)\leq \langle M f,f \rangle-\langle MP_{k-1} f, P_{k-1} f\rangle.
\end{equation}
Now consider a family of such trial functions $f_z$ indexed by
a variable over which we can average.  By averaging over
two different sets, we get the following variational principle,
corresponding to the main theorem of \cite{Kro}.
\begin{theorem}\label{Krlemma}
Consider a self-adjoint operator $M$ on a Hilbert space $\mathcal{H}$,
with ordered, entirely discrete
spectrum $- \infty < \mu_0 \le \mu_1 \le \dots$
and corresponding normalized eigenvectors $\{\mathbf{\psi}^{(\ell})\}$.
Let $f_z$ be
a family of vectors in $\mathcal{Q}(M)$ indexed by
a variable $z$ ranging over
a measure space $(\mathfrak{M},\Sigma,\sigma)$.
Suppose that $\mathfrak{M}_0$ is a
subset of $\mathfrak{M}$.  Then for any eigenvalue $\mu_k$ of $M$,
\begin{equation}\label{kroeger-2}
\begin{split}
    &\mu_{k} \bigg(\int_{\mathfrak{M}_0}\langle f_z,f_z\rangle\,d \sigma-\sum_{j=0}^{k-1}\int_{\mathfrak{M}}|\langle f_z,\mathbf{\psi}^{(j)}\rangle|^2\,d \sigma\bigg)\\
&\leq\\
&\int_{\mathfrak{M}_0}{\langle H f_z,f_z\rangle d \sigma}-\sum_{j=0}^{k-1}\mu_j\int_{\mathfrak{M}}|\langle f_z,\mathbf{\psi}^{(j)}\rangle|^2\,d \sigma,\\
\end{split}
\end{equation}
provided that the integrals converge.
\end{theorem}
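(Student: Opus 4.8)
The plan is to obtain \eqref{kroeger-2} from the pointwise variational inequality \eqref{var-principle} by integrating the latter only over $\mathfrak{M}_0$ and then enlarging the domain of integration in the projection terms from $\mathfrak{M}_0$ to all of $\mathfrak{M}$, an enlargement that turns out to be free of charge because of the ordering of the eigenvalues. First I would rewrite \eqref{var-principle}, applied to $f = f_z$, in the eigenbasis. Since $P_{k-1}$ is the orthogonal projection onto $\mathrm{span}\{\mathbf{\psi}^{(0)},\dots,\mathbf{\psi}^{(k-1)}\}$, one has $\langle P_{k-1} f_z, P_{k-1} f_z\rangle = \sum_{j=0}^{k-1}|\langle f_z,\mathbf{\psi}^{(j)}\rangle|^2$ and $\langle M P_{k-1} f_z, P_{k-1} f_z\rangle = \sum_{j=0}^{k-1}\mu_j|\langle f_z,\mathbf{\psi}^{(j)}\rangle|^2$, so for each $z$
\begin{equation*}
\mu_k\Big(\langle f_z,f_z\rangle - \sum_{j=0}^{k-1}|\langle f_z,\mathbf{\psi}^{(j)}\rangle|^2\Big) \le \langle M f_z, f_z\rangle - \sum_{j=0}^{k-1}\mu_j|\langle f_z,\mathbf{\psi}^{(j)}\rangle|^2.
\end{equation*}

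Next I would integrate this inequality over $\mathfrak{M}_0$ with respect to $\sigma$, producing an \emph{auxiliary} estimate in which every term, including the projection terms, is integrated over $\mathfrak{M}_0$. Writing $P_j(S) := \int_S |\langle f_z,\mathbf{\psi}^{(j)}\rangle|^2\,d\sigma$ for measurable $S$, a direct computation shows that the gap (right side minus left side) of the desired inequality \eqref{kroeger-2} exceeds the gap of the auxiliary estimate by exactly $\sum_{j=0}^{k-1}(\mu_k-\mu_j)\,P_j(\mathfrak{M}\setminus\mathfrak{M}_0)$, since the terms integrated over $\mathfrak{M}_0$ cancel and only the enlargement $\mathfrak{M}\setminus\mathfrak{M}_0$ of the projection integrals survives. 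I would then observe that each summand here is nonnegative: $\mu_k - \mu_j \ge 0$ because $j \le k-1$ and the spectrum is ordered, while $P_j(\mathfrak{M}\setminus\mathfrak{M}_0) \ge 0$ as an integral of a nonnegative function. As the auxiliary gap is itself nonnegative (it is \eqref{var-principle} integrated over $\mathfrak{M}_0$), adding this nonnegative surplus yields \eqref{kroeger-2}.

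The only genuinely delicate point is this last cancellation, which is the heart of Kr\"oger's ``average-differently'' device: the norm and energy terms must remain integrated over $\mathfrak{M}_0$, whereas extending the projection terms to the full space $\mathfrak{M}$ strictly helps precisely because the coefficient $\mu_k-\mu_j$ attached to each extension is nonnegative for $j<k$. I expect the main obstacle to be bookkeeping rather than analysis: one must carefully track which terms carry $\mu_k$, which carry $\mu_j$, and over which set each is integrated, so that the surplus collapses to the manifestly nonnegative sum above. Convergence of all integrals is assumed in the hypotheses, so the integration step needs no further justification; in the matrix setting of this paper $\mathcal{H}=\mathbb{C}^n$ and every integrand is bounded, so the interchange of the finite sum and the integral is automatic. (I also note that the operator appearing on the right-hand side of \eqref{kroeger-2} is printed as $H$; it should read $M$, in keeping with the hypothesis that $M$ is the operator under study.)
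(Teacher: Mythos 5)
Your proposal is correct and follows essentially the same route as the paper: integrate \eqref{var-principle} over $\mathfrak{M}_0$, expand the projection terms in the eigenbasis, and then extend the projection integrals from $\mathfrak{M}_0$ to all of $\mathfrak{M}$, the surplus being exactly the nonnegative quantity $\sum_{j=0}^{k-1}(\mu_k-\mu_j)\int_{\mathfrak{M}\setminus\mathfrak{M}_0}|\langle f_z,\mathbf{\psi}^{(j)}\rangle|^2\,d\sigma$ (the paper phrases this as adding the inequality expressing that $\mu_k$ dominates any weighted average of $\mu_0,\dots,\mu_{k-1}$). Your parenthetical observation that the $H$ on the greater side of \eqref{kroeger-2} should read $M$ is also correct.
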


\begin{proof}
By integrating \eqref{var-principle},
\begin{align}
& \mu_{k}\int_{\mathfrak{M}_0}\left(\langle f_z,f_z\rangle-\langle P_{k-1} f, P_{k-1} f_z\rangle\right)
d \sigma\\
&\quad  \leq \int_{\mathfrak{M}_0}\langle M f_z,f_z \rangle \,d \sigma-\int_{\mathfrak{M}_0}\langle MP_{k-1} f_z, P_{k-1} f_z \rangle\,d \sigma,\nonumber
\end{align}
or
\begin{align}\label{averaged-var-principle}
& \mu_{k}\int_{\mathfrak{M}_0}\left(\langle f_z,f_z\rangle-\sum_{j=0}^{k-1}|\langle f_z,
\mathbf{\psi}^{(j)}\rangle|^2\right)d \sigma\\
&\quad \leq \int_{\mathfrak{M}_0}\langle M f_z,f_z \rangle \,d \sigma-\int_{\mathfrak{M}_0}\sum_{j=0}^{k-1}\mu_j|\langle f_z,\mathbf{\psi}^{(j)}\rangle|^2\,d \sigma.\nonumber
\end{align}
Since $\mu_{k}$ is larger than or equal to
any weighted average of $\mu_{1} \dots \mu_{k-1}$,
we add to \eqref{averaged-var-principle} the inequality
\begin{equation}
- \mu_{k}\int_{\mathfrak{M}\setminus \mathfrak{M}_0}\left(\sum_{j=0}^{k-1}|\langle f_z,\mathbf{\psi}^{(j)}\rangle|^2\right)d \sigma
 \leq - \int_{\mathfrak{M} \setminus \mathfrak{M}_0}\sum_{j=0}^{k-1}\mu_j|\langle f_z,\mathbf{\psi}^{(j)}\rangle|^2\,d \sigma,
\end{equation}
and obtain the claim.
\end{proof}

Although Theorem \ref{Krlemma} appears designed to bound $\mu_{k}$,
its most notable use is to provide an upper bound on
$\mu_0 + \dots + \mu_{k-1}$ by
arranging that the left side be nonnegative, under which condition
\begin{equation}\label{KrIneq}
\sum_{j=0}^{k-1}\mu_j\int_{\mathfrak{M}}|\langle f_z,\mathbf{\psi}^{(j)}\rangle|^2\,d \sigma
\le \int_{\mathfrak{M}_0}{\langle M f_z,f_z\rangle d \sigma}.
\end{equation}
In this work, inequalities obtained using Theorem \ref{Krlemma}
will turn out to satisfy the hypotheses of a
celebrated theorem of J. Karamata 
(e.g., see \cite[\S 28]{BeBe}),
which we restate here in a slightly extended version:

\begin{lemma}[{Karamata-Ostrowski}] \label{Karamata}
Let two nondecreasing ordered sequences of real numbers $\{\mu_j\}$ and $\{m_j\}$, $j = 0, \dots, n-1$, satisfy
\begin{equation}\label{KaraCond}
\sum_{j=0}^{k-1}{\mu_j} \le \sum_{j=0}^{k-1}{m_j}
\end{equation}
for each $k$.
Then for any differentiable convex function $\Psi(x)$,
$$
\sum_{j=0}^{k-1}{\Psi(\mu_j)} \ge \sum_{j=0}^{k-1}{\Psi(m_j)}+\Psi'(m_{k-1})\cdot\sum_{j=0}^{k-1}({\mu_j} - {m_j}).
$$
In particular,
assuming either that $\Psi$ is nonincreasing
or that $\displaystyle \sum_{j=0}^{k-1}{\mu_j} =\sum_{j=0}^{k-1}{m_j}$,
$$
\sum_{j=0}^{k-1}{\Psi(\mu_j)} \ge \sum_{j=0}^{k-1}{\Psi(m_j)}
$$
for each $k$.  Similarly,
for any nondecreasing concave
function $\Phi(x)$ and each $k$,
$$
\sum_{j=0}^{k-1}{\Phi(\mu_j)} \le \sum_{j=0}^{k-1}{\Phi(m_j)}.
$$
\end{lemma}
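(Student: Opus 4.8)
The plan is to reduce everything to the first, refined inequality, from which the two special cases and the concave version follow by elementary sign considerations. The engine will be the supporting-line characterization of convexity together with a single summation by parts, which is the classical Karamata argument adapted to the one-sided hypothesis \eqref{KaraCond}.

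First I would invoke the differentiable convexity of $\Psi$ in the form of the tangent-line bound $\Psi(\mu_j) - \Psi(m_j) \ge \Psi'(m_j)(\mu_j - m_j)$, valid for every $j$. Summing over $j = 0, \dots, k-1$ reduces the task to establishing that $\sum_{j=0}^{k-1} \Psi'(m_j)(\mu_j - m_j) \ge \Psi'(m_{k-1}) \sum_{j=0}^{k-1}(\mu_j - m_j)$; once this is in hand the refined inequality is immediate.

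To prove that reduced inequality I would set $d_j := \mu_j - m_j$, $c_j := \Psi'(m_j)$, and the partial sums $S_i := \sum_{j=0}^{i-1} d_j$ with $S_0 := 0$, so that the hypothesis \eqref{KaraCond} reads precisely $S_i \le 0$ for every $i$. Writing $d_j = S_{j+1} - S_j$ and applying Abel summation yields $\sum_{j=0}^{k-1} c_j d_j = c_{k-1} S_k + \sum_{i=1}^{k-1}(c_{i-1} - c_i) S_i$. Here two monotonicity facts combine: since $\{m_j\}$ is nondecreasing and $\Psi'$ is nondecreasing as the derivative of a convex function, the sequence $\{c_i\}$ is nondecreasing, so each factor $c_{i-1} - c_i$ is $\le 0$; and each $S_i \le 0$ by hypothesis. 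Hence every term of the residual sum is $\ge 0$, leaving $\sum_{j=0}^{k-1} c_j d_j \ge c_{k-1} S_k = \Psi'(m_{k-1}) \sum_{j=0}^{k-1}(\mu_j - m_j)$, as required.

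The two specializations then follow at once. If the partial sums coincide, then $S_k = 0$ annihilates the error term; if instead $\Psi$ is nonincreasing then $\Psi'(m_{k-1}) \le 0$ while $S_k \le 0$, so the product $\Psi'(m_{k-1}) S_k \ge 0$ and may be dropped from the lower bound. For a nondecreasing concave $\Phi$, applying the nonincreasing case to the convex, nonincreasing function $\Psi := -\Phi$ gives $\sum_{j=0}^{k-1}\Phi(\mu_j) \le \sum_{j=0}^{k-1}\Phi(m_j)$. The only point demanding care is the bookkeeping in the summation by parts together with the simultaneous verification of both sign conditions, $c_{i-1} - c_i \le 0$ and $S_i \le 0$, which is what forces the residual sum to be nonnegative rather than of indefinite sign; this is precisely where the combined monotonicity of $\{m_j\}$ and of $\Psi'$ is essential.
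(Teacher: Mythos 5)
Your proof is correct: the tangent-line (supporting-line) bound for convex $\Psi$ followed by Abel summation is the standard proof of the Karamata majorization inequality, and your bookkeeping --- the identity $\sum_{j=0}^{k-1} c_j d_j = c_{k-1}S_k + \sum_{i=1}^{k-1}(c_{i-1}-c_i)S_i$ together with the two sign conditions $c_{i-1}-c_i\le 0$ and $S_i\le 0$ --- checks out and yields exactly the refined inequality with the $\Psi'(m_{k-1})$ correction term. The paper gives no proof of this lemma (it is quoted from Beckenbach--Bellman), so there is nothing internal to compare against; the only point worth flagging is that the concave case is stated without assuming $\Phi$ differentiable, which your reduction to $\Psi=-\Phi$ tacitly requires, though replacing $\Psi'$ by a one-sided derivative or subgradient repairs this at no cost.
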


As our first application of Theorem \ref{Krlemma}, using the same stratagem as in
\cite{Kro},
we suppose that a graph $G$ is a finite subgraph of the
cubic lattice graph $\frak{Q}^\nu$.
By definition $\frak{Q}^\nu$ has countably many vertices, which
will be labeled by integer-valued vectors ${\bf x} \in \Z^\nu$ rather than by a single integer
$v$. Two vertices are adjacent
precisely when all but one of the coordinates of ${\bf x}_1$ and
${\bf x}_2$ are equal, while the remaining coordinate differs by $\pm 1$.
As a finite subgraph, the vertex and edge sets of $G$ are subsets of those of
$\frak{Q}^\nu$.

\begin{prop}\label{lattice ex}
Suppose that $G$ is a finite subgraph of $\frak{Q}^\nu$.  Then for $k \ge 1$
the eigenvalues of the graph Laplacian $H_G$ satisfy
\begin{equation}\label{latticeWeyl}
\sum_{j=0}^{k-1}{\lambda_j} \le 2 m \kappa \left(1 - {\rm sinc}(\kappa ^{1/\nu} \pi)\right),
\end{equation}
where ${\rm sinc}(x) := \sin(x)/x$, and $\kappa := k/n$.
Moreover, the sum of squares can be bounded in terms of
simple topological properties of the graph
including
the number of pairs of neighbors of vertex ${\bf p}$ that are collinear, which we denote $d_{\bf p}^\|$:
\begin{equation}\label{latticeWeyl2}
\begin{split}
  \sum_{j=0}^{k-1}{\lambda_j^2} &\leq \, \kappa \,(1-\rm sinc (\pi\kappa^{1/\nu}))^2 \text{Tr}\, (H^2)\\
   &\quad + 2 \,\kappa\, \rm sinc (\pi\kappa^{1/\nu})(1-\, \rm sinc (\pi\kappa^{1/\nu})) \sum_{{\bf x} \in G} d_{\bf x}\\
  &\quad - 2\,\kappa\, \rm sinc (\pi\kappa^{1/\nu})(1-\cos (\pi\kappa^{1/\nu}))\sum_{{\bf x} \in G} d^\|_{\bf x}.\\
\end{split}
\end{equation}
\end{prop}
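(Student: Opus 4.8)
The plan is to obtain both bounds as applications of the averaged variational principle in the form \eqref{KrIneq}, following Kr\"oger's stratagem with lattice plane waves as the trial family. Label the vertices of $G$ by their positions $\mathbf{x} \in \Z^\nu \subset \frak{Q}^\nu$, and for $z$ in the Brillouin cube $\mathfrak{M} := [-\pi,\pi]^\nu$ set $f_z(\mathbf{x}) := e^{i z \cdot \mathbf{x}}$, equipped with the normalized measure $d\sigma := (2\pi)^{-\nu}\, dz$. The one structural input I need is the orthogonality relation $\int_{\mathfrak{M}} e^{i z\cdot(\mathbf{x}-\mathbf{y})}\, d\sigma = \delta_{\mathbf{x}\mathbf{y}}$, valid because $\mathbf{x}-\mathbf{y}\in\Z^\nu$. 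Two consequences are immediate and independent of which operator is inserted: first, $\langle f_z, f_z\rangle = n$ for every $z$; second, for any normalized vector $\mathbf{\psi}^{(j)}$ one has $\int_{\mathfrak{M}}|\langle f_z, \mathbf{\psi}^{(j)}\rangle|^2\, d\sigma = \|\mathbf{\psi}^{(j)}\|^2 = 1$. Thus the Bessel weights in Theorem \ref{Krlemma} are all equal to $1$, and the left-hand side of that theorem is nonnegative precisely when $\sigma(\mathfrak{M}_0)\ge \kappa$. I therefore choose $\mathfrak{M}_0$ to be the centered cube of half-side $\pi\kappa^{1/\nu}$, so that $\sigma(\mathfrak{M}_0)=\kappa$ exactly and \eqref{KrIneq} collapses to $\sum_{j=0}^{k-1}\mu_j \le \int_{\mathfrak{M}_0}\langle M f_z, f_z\rangle\, d\sigma$.

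For \eqref{latticeWeyl} I take $M = H$. Using \eqref{weakLapDef} and grouping the edges of $G$ by their lattice direction $r$, a single edge in direction $r$ contributes $|f_z(\mathbf{x}) - f_z(\mathbf{x}\pm\mathbf{e}_r)|^2 = 2(1-\cos z_r)$ to $\langle H f_z, f_z\rangle$. Because the cube $\mathfrak{M}_0$ is invariant under permutations of coordinates, $\int_{\mathfrak{M}_0} 2(1-\cos z_r)\, d\sigma$ is the same for every $r$, so the direction-by-direction edge counts coalesce and only the total $m$ survives. A one-dimensional computation gives $\int_{\mathfrak{M}_0}\cos z_r\, d\sigma = \kappa\,{\rm sinc}(\pi\kappa^{1/\nu})$, whence $\int_{\mathfrak{M}_0}\langle H f_z, f_z\rangle\, d\sigma = 2m\kappa\bigl(1 - {\rm sinc}(\pi\kappa^{1/\nu})\bigr)$, which is \eqref{latticeWeyl}.

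For \eqref{latticeWeyl2} I apply \eqref{KrIneq} a second time, now to $M = H^2$: this is legitimate because $H^2$ shares its eigenvectors with $H$ (so the weights are again $1$) and has eigenvalues $\lambda_j^2$ in the same order, and the nonnegativity condition is unchanged. The new ingredient is $\langle H^2 f_z, f_z\rangle = \|H f_z\|^2 = \sum_{\mathbf{x}} |h_\mathbf{x}(z)|^2$, where $h_\mathbf{x}(z) := \sum_{(\epsilon,r)\in S_\mathbf{x}}(1 - e^{i\epsilon z_r})$ and $S_\mathbf{x}$ is the set of signed directions in which $\mathbf{x}$ has a neighbor in $G$. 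Expanding the square produces a double sum over pairs of neighbor-slots of $\mathbf{x}$, which I split into three families after integrating against $d\sigma$ over $\mathfrak{M}_0$: the diagonal terms, which assemble, via $\text{Tr}\,(H^2)=2m+\sum_\mathbf{x} d_\mathbf{x}^2$ from \eqref{edgetrace}, into the $\text{Tr}\,(H^2)$ and $\sum_\mathbf{x} d_\mathbf{x}$ contributions; the off-diagonal pairs lying in distinct directions, which factorize through the product structure of the cube and supply the $(1-{\rm sinc})^2$ weight; and the off-diagonal pairs lying in a common direction, which by simplicity of the graph can only be the two ends of a collinear pair at $\mathbf{x}$ and are therefore enumerated by $d_\mathbf{x}^\|$. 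Assembling the three families and using $\sum_\mathbf{x} d_\mathbf{x} = 2m$ yields \eqref{latticeWeyl2}.

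The main obstacle is this last family in the sum-of-squares computation. Unlike the linear bound, where the interference terms are absent, $\|H f_z\|^2$ contains genuine cross terms between the two collinear half-edges at a vertex, and evaluating them requires the second-harmonic average $\int_{\mathfrak{M}_0}\cos(2 z_r)\, d\sigma = \kappa\,{\rm sinc}(2\pi\kappa^{1/\nu})$ in place of the first-harmonic one; keeping careful track of how these collinear cross terms recombine with the distinct-direction and diagonal contributions is exactly what fixes the coefficient of $\sum_\mathbf{x} d_\mathbf{x}^\|$. I note finally that Lemma \ref{Karamata} is \emph{not} the route to \eqref{latticeWeyl2}: since $x \mapsto x^2$ is convex, Karamata would bound $\sum \lambda_j^2$ from below, the wrong direction, so the upper bound must be produced directly by this second application of \eqref{KrIneq}.
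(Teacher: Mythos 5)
Your proposal is correct and follows essentially the same route as the paper: the same plane-wave family $f_z(\mathbf{x})=e^{iz\cdot\mathbf{x}}$ on $\mathfrak{M}=[-\pi,\pi]^\nu$ with $\mathfrak{M}_0$ a sub-cube of relative volume $\kappa$, the same appeal to Theorem~\ref{Krlemma} via \eqref{KrIneq} (the paper keeps a general half-width $a\pi$ and lets $a^\nu\to\kappa$ only at the end, a cosmetic difference, as is your use of the normalized measure), and for the squares the same application to $H^2$ with the same split of $\|Hf_z\|^2$ into diagonal, cross-direction, and collinear contributions. One caveat: if you carry the collinear cross terms out explicitly you obtain the coefficient $2\kappa\left({\rm sinc}(2\pi\kappa^{1/\nu})-{\rm sinc}^2(\pi\kappa^{1/\nu})\right)=-2\kappa\,{\rm sinc}(\pi\kappa^{1/\nu})\left({\rm sinc}(\pi\kappa^{1/\nu})-\cos(\pi\kappa^{1/\nu})\right)$ for $\sum_{\mathbf{x}}d^\|_{\mathbf{x}}$, which matches the paper's own penultimate display but not the factor $\left(1-\cos(\pi\kappa^{1/\nu})\right)$ printed in \eqref{latticeWeyl2}; the statement appears to carry a typo there, so do not expect your three families to assemble to \eqref{latticeWeyl2} exactly as written.
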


\begin{remarks}
1. Because of \eqref{edgetrace}, Inequalities \eqref{latticeWeyl} and \eqref{latticeWeyl2}
show that four topological properties of the graph, {\it viz}., the dimension of the ambient lattice,
the number of edges of $G$, its Zagreb index, and the quantity $\sum_{{\bf x} \in G} d^\|_{\bf x}$, control the distribution of eigenvalues of subgraphs of a cubic lattice.  The upper bounds are increasing functions of the dimension, 
which means that these estimates provide a family of necessary conditions for embeddability of the graph in a lattice of dimension $\nu$ or less.  The authors plan to discuss further
spectral conditions for embeddability
of graphs in regular lattices in a future article.

2. As a simplification of \eqref{latticeWeyl}
it is true independently of dimension that
\begin{equation}\label{simplattbd}
\sum_{j=1}^{k-1}{\lambda_j} \le 2 m \kappa,
\end{equation}
which becomes a standard equality when $\kappa= 1$ (i.e., $k=n$).
Inequality \eqref{latticeWeyl2} also yields an equality when $\kappa= 1$.
Another upper bound,
\begin{equation}\label{Weylupper}
\sum_{j=1}^{k-1}{\lambda_j} \le \frac{\pi^2 m}{3} \kappa^{1+\frac{2}{\nu}},
\end{equation}
which has the
form of the Weyl law for Laplacians on
domains $\Omega \subset \R^\nu$, is both better
when $k \ll n$ and
correct to leading order in $\kappa$.

3. Complementary lower bounds for $\sum_{j=k}^{n-1}{\lambda_j}$ are available
as usual by calling upon
$\sum_{j=1}^{n-1}{\lambda_j} = 2 m$ or by passing from $H$ to $-H$. There
are similar bounds when the complementary graph $G'$ is embedded in 
$\mathfrak{Q}^\nu$ owing to the standard relation 
among the nontrivial eigenvalues,
$\lambda_j=n-\lambda'_j$.

4.
When a graph $G$ is
embedded in $\mathfrak{Q}^\nu$ its
Laplacian energy (see \cite{GuZh})
also satisfies a Weyl-type estimate.  Recall that
by definition,
$\displaystyle LE(G):=\sum_{i=0}^{n-1}\left|\lambda_i-\frac{2m}{n}\right|
=2\sum_{i=0}^{n-1}\left(\frac{2m}{n}-\lambda_i\right)_{+}$. Since the variational inequality  \eqref{KrGraphLaplacian} holds when $\lambda_k$ is replaced by $z\in[\lambda_{k-1},\lambda_k]$ it is equivalent to the following inequality for the {\em Riesz mean} of the spectrum,
\begin{equation}\label{KrGraphLaplacianRieszMean}
  \sum_{j}(z-\lambda_j)_{+}\geq zna^{\nu}-2 m a^\nu \left(1 - {\rm sinc}(\pi a)\right)
\end{equation}
for all $z\in[0,2m]$.
After simplifying and optimizing with respect to $a\in[0,1]$,
\eqref{KrGraphLaplacianRieszMean} becomes
\begin{equation}\label{GraphLaplacianRieszMeanWeylLower}
  \sum_{j}(z-\lambda_j)_{+}\geq \frac{2}{\nu}\cdot\frac{m\pi^2}{3}\bigg(\frac{\nu}{\nu+2}\cdot\frac{3nz}{m\pi^2}\bigg)^{1+\frac{\nu}{2}}.
\end{equation}
In particular, the Laplacian energy of a finite subgraph of
$\mathfrak{Q}^\nu$ satisfies
\begin{equation}\label{LEGestimate}
LE(G) \geq 4 m \underset{0\leq a\leq 1}{\max}a^\nu {\rm sinc}(\pi a) \geq \frac{8 m}{\nu +2} \left(\frac{6 \nu}{\pi^2(\nu+2)}\right)^{\nu/2}.
\end{equation}
\end{remarks}

\begin{proof}
We use Theorem \ref{Krlemma}, taking $\mathfrak{M}$ as the cube
$[-\pi, \pi]^\nu$, with Lebesgue measure, and
a vector-valued ${\bf z} \in \mathfrak{M}$; we can then
make the same choice of test functions
as in \cite{Kro}, {\it viz}., $f_{\bf z}({\bf x}) = \exp(i {\bf x}\cdot {\bf z})$.
(However, now think of
$f_{\bf z}$ as a function on the graph $G$ consisting of set of vertices ${\bf x}$ having integer
coordinates,
with a parameter ${\bf z}$ ranging over $\R^\nu$.)
The discrete Fourier transform on functions on $G$ is normalized as
$$
{\hat \phi}({\bf z}) := \sum_{{\bf x} \in G}{\e^{-i {\bf x}\cdot{\bf z}} \phi_{\bf x}},
$$
and we observe that
the  inversion formula for functions in the range of this transform is
\begin{equation}\label{comprel}
\phi_{\bf x} = \frac{1}{(2 \pi)^\nu} \int_{[-\pi,\pi]^\nu}{\e^{i {\bf x}\cdot{\bf z}} {\hat \phi}({\bf z})}.
\end{equation}

We begin by calculating from \eqref{weakLapDef}
\begin{equation}\label{A_0lattice2}
\left\langle H \e^{i {\bf x}\cdot {\bf z}}, \e^{i {\bf x}\cdot {\bf z}}\right\rangle =
\frac 1 2 \sum_{{\bf x} \in G}{\sum_{{\bf q} \sim {\bf x}} {|\e^{i {\bf x}\cdot {\bf z}} - \e^{i {\bf q}\cdot {\bf z}}|^2}}.
\end{equation}
Now, each term ${|\e^{i {\bf x}\cdot {\bf z}} - \e^{i {\bf q}\cdot {\bf z}}|^2}$
simplifies to $|e^{\pm i z_\ell} - 1|^2 = 4 \sin^2\left(\frac{z_\ell}{2}\right)$ for one of the
Cartesian coordinates $z_\ell$.  If we integrate over a cube of the form
$\mathfrak{M}_0 := [-a \pi, a \pi]^\nu$,
then these terms are replaced by $4 (a \pi - \sin(a \pi))$,
and thus the quantity in \eqref{A_0lattice2} evaluates to
$$
2 (a \pi - \sin(a \pi))(2 a \pi)^{\nu-1} \sum_{{\bf x} \in G}{d_{\bf x}}
= (2 a \pi)^\nu 2 \left(1 - \frac{\sin(a \pi)}{a \pi}\right)m,
$$
drawing upon \eqref{edgetrace}.
To evaluate the other quantity on the right side of
\eqref{kroeger-2}, we note that by the completeness relation associated with
\eqref{comprel},
$$
\int_{[-\pi, \pi]^\nu}{|\left\langle{\e^{i {\bf x}\dot{\bf z}}, \mathbf{\psi}^{(j)}}\right\rangle|^2}
= (2 \pi)^\nu \|\mathbf{\psi}^{(j)}\|^2 = (2 \pi)^\nu.
$$
Meanwhile, after integration, the lesser side of \eqref{kroeger-2} becomes
$$
\lambda_{k} \left(n (2 a \pi)^\nu - k(2 \pi)^\nu\right),
$$
and therefore, after division by $(2 \pi)^\nu$, we obtain
\begin{equation}\label{KrGraphLaplacian}
\lambda_{k} \left(n a^\nu - k\right)\leq 2 m a^\nu \left(1 - {\rm sinc}(\pi a)\right)-\sum_{j=0}^{k-1}{\lambda_j}
\end{equation}
for all $0\leq a\leq 1$.
Letting $a^\nu \to \kappa$, we obtain \eqref{latticeWeyl}.

For the inequality on sums of squares we observe that
$$
H f_{\bf z}\big|_{\bf x} =  d_{\bf x} e^{i {\bf x} \cdot {\bf z}} - \sum_{{\bf q}\sim{\bf x}}{e^{i {\bf q} \cdot {\bf z}}}
= e^{i {\bf x} \cdot {\bf z}}\left(d_{\bf x} -  \sum_{{\bf q}\sim{\bf x}}{e^{\pm i z_\ell}}\right),
$$
where as before the Cartesian direction $\ell$ and the sign depend on ${\bf x}$ and ${\bf q}$.  Thus
\begin{align}\label{Hf^2}
\left\langle{f_{\bf z}, H^2 f_{\bf z}}\right\rangle &= \|H f_{\bf z}\|^2\nonumber \\
&=
\sum_{\bf x}{\left(d_{\bf x}^2 - 2 d_{\bf x}
\Re\left(\sum_{{\bf q}\sim{\bf x}}{e^{\pm i z_\ell}}\right) + \left|\sum_{{\bf q}\sim{\bf x}}{e^{\pm i z_\ell}}\right|^2
\right)}.
\end{align}

When integrated in ${\bf z}$ over the cube $[-a \pi, a \pi]^\nu$, the first two contributions to this equation become
$((2 a \pi)^\nu - 4 ((2 a \pi)^{\nu-1} \sin a \pi) \sum_{\bf x}{d_{\bf x}^2}$. The final term in
\eqref{Hf^2} reflects the way in which the graph is embedded in $\frak{Q}$:
With $d^\|_{\bf x}$ as defined in the Theorem,
$$
\int_{[-a \pi, a \pi]^\nu}{\left|\sum_{{\bf q}\sim{\bf x}}{e^{\pm i z_\ell}}\right|^2 d {\rm Vol}_{\bf z}} = d_{\bf x} (2 a \pi)^{\nu} + {\rm cross \,\, terms},
$$
where the latter amount to
$$
2 (2 a \pi)^{\nu-2}\left((2 a \pi) d^\|_{\bf x} \int_{-a \pi}^{a\pi}{\cos(2 z) dz} + \left(\left(
{\begin{array}{c}
d_{\bf x}\\
2\\
\end{array}}
\right) - d^\|_{\bf x} \right)
\left(\int_{-a \pi}^{a\pi}{\cos(z) dz}\right)^2
\right)
$$
$$
= 4 (2 a \pi)^{\nu-2}\left((a \pi) d^\|_{\bf x} \sin(2 a \pi) + \left(\frac{d_{\bf x}^2 - d_{\bf x}}{2} - d^\|_{\bf x} \right)
2 \sin^2(a \pi)
\right).
$$
Summing all the contributions, the inequality corresponding to \eqref{KrIneq} reads
$$
(2 \pi)^\nu \sum_{j=1}^{k-1}{\lambda_j^2} \le
(2 a \pi)^\nu \left[
\left(1 - \left({\rm sinc}\left( a \pi\right)\right)^2\right) \sum_{{\bf x} \in G}{d_{\bf x}}\right.\quad\quad\quad\quad\quad\quad\quad\quad\quad\quad
$$
$$
\quad\quad\quad\quad\quad
+ \left.
\left(1 - 2 \, {\rm sinc}\left( a \pi\right)+ \left({\rm sinc}\left( a \pi\right)\right)^2\right) \sum_{{\bf x} \in G}{d_{\bf x}^2}\right.
$$
$$
\quad\quad\quad\quad\quad
+\left.
2 \left({\rm sinc}\left(2 a \pi\right) -\left({\rm sinc}\left(a \pi\right)\right)^2 \right) \sum_{{\bf x} \in G}{d^\|_{\bf x}}\right]
$$
Again letting
$a^\nu \to \kappa$, we obtain \eqref{latticeWeyl2}.
\end{proof}

The upper bound in inequality \eqref{latticeWeyl} is an increasing convex function of $\kappa=k/n$. Defining this upper bound as $S_k$, it follows that $ m_k=S_k-S_{k-1}$ is a sequence satisfying the hypotheses of Karamata's inequality.  As a consequence,

\begin{cor}
Under the same conditions as in Proposition \ref{lattice ex}, for any nondecreasing
concave function $\Phi(x)$,
$$
\sum_{j=0}^{k-1}{\Phi(\lambda_j)} \le \sum_{j=0}^{k-1}{\Phi\left(\left(1 + \frac 2 \nu\right)\frac{\pi^2 m j^{2/\nu}}{3 \,n^{1+2/\nu}}\right)},
$$
and for any
nonincreasing
convex function $\Psi(x)$,
\begin{equation}\label{partitionbd}
\sum_{j=0}^{k-1}{\Psi(\mu_j)} \ge \sum_{j=0}^{k-1}{\Psi\left(\left(1 + \frac 2 \nu\right)\frac{\pi^2 m j^{2/\nu}}{3\, n^{1+2/\nu}}\right)}.
\end{equation}
\end{cor}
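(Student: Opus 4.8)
The plan is to read the partial-sum bound of Proposition~\ref{lattice ex} as a majorization statement for the ordered eigenvalues $\{\lambda_j\}$ and then to feed that majorization into the Karamata--Ostrowski inequality of Lemma~\ref{Karamata}. Concretely, I would set $S_k := 2m(k/n)\bigl(1-{\rm sinc}((k/n)^{1/\nu}\pi)\bigr)$, the right-hand side of \eqref{latticeWeyl}, which the preceding discussion records as a convex, increasing function of $k$ with $S_0=0$. Defining the increments $m_j := S_{j+1}-S_j$, convexity of $S$ makes $\{m_j\}$ nondecreasing, while a telescoping identity gives $\sum_{j=0}^{k-1}m_j = S_k$. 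Proposition~\ref{lattice ex} then reads $\sum_{j=0}^{k-1}\lambda_j \le S_k = \sum_{j=0}^{k-1}m_j$ for every $k$, which is exactly hypothesis \eqref{KaraCond} of Lemma~\ref{Karamata} for the pair $\{\lambda_j\},\{m_j\}$.

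With this majorization in hand I would invoke Lemma~\ref{Karamata} twice. For a nondecreasing concave $\Phi$ the lemma yields $\sum_{j=0}^{k-1}\Phi(\lambda_j)\le\sum_{j=0}^{k-1}\Phi(m_j)$, and for a nonincreasing convex $\Psi$ the companion statement gives $\sum_{j=0}^{k-1}\Psi(\lambda_j)\ge\sum_{j=0}^{k-1}\Psi(m_j)$; here I read $\mu_j=\lambda_j$, as dictated by the phrase ``under the same conditions as in Proposition~\ref{lattice ex}''. At this stage the bounds are expressed through the intrinsic increments $m_j$ of the sinc-type estimate, and the remaining task is to replace $m_j$ by the explicit Weyl density $w_j:=\bigl(1+\tfrac2\nu\bigr)\frac{\pi^2 m j^{2/\nu}}{3\,n^{1+2/\nu}}$ that appears in the statement. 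Since $\Phi$ is nondecreasing and $\Psi$ is nonincreasing, it suffices to control $m_j$ by $w_j$ in one direction, after which monotonicity transfers each inequality into the stated form.

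The simplification rests on the elementary estimate $1-{\rm sinc}(x)\le x^2/6$, which converts the sinc bound \eqref{latticeWeyl} into the Weyl-type bound \eqref{Weylupper}, namely $\sum_{j=0}^{k-1}\lambda_j\le\frac{\pi^2 m}{3}(k/n)^{1+2/\nu}=:\widetilde S_k$. One observes that $w_j$ is precisely $\widetilde S'(j)$, the left-endpoint derivative of this smooth majorant. I would therefore recast the argument with $\widetilde S_k$ in place of $S_k$ where convenient and use the mean-value theorem to express each increment as a derivative at an interior point for comparison with $w_j=\widetilde S'(j)$.

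The hard part will be this final comparison between the discrete increments and the continuous density $w_j$. Because $\widetilde S$ is convex, its increment $\widetilde S_{j+1}-\widetilde S_j$ \emph{exceeds} the left derivative $\widetilde S'(j)=w_j$, so replacing increments by the density sharpens the bound in the direction the Corollary demands rather than in the direction Karamata supplies for free; equivalently, $\sum_{j=0}^{k-1}w_j$ is a left Riemann sum that \emph{underestimates} $\int_0^k\widetilde S'=\widetilde S_k$. Reconciling these opposing convexity and Riemann-sum effects, together with the off-by-one in the indexing of $m_j$ versus $w_j$, is where the genuine work lies, and I expect it to require the full pointwise inequality \eqref{KrGraphLaplacian}, valid for every $a\in[0,1]$ and not merely at $a^\nu=k/n$, rather than the single integrated consequence \eqref{latticeWeyl}. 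Once the domination $m_j\le w_j$ is secured, monotonicity of $\Phi$ and of $\Psi$ closes both inequalities, and the $j=0$ terms match trivially because $\lambda_0=0=w_0$.
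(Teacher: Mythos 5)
Your overall strategy---read \eqref{latticeWeyl}/\eqref{Weylupper} as a majorization of the ordered $\lambda_j$ by the increments of a convex upper bound $S_k$ and then invoke Lemma \ref{Karamata}---is exactly the paper's route; its entire proof is the remark that the Corollary is ``simply an application of Karamata's Lemma to the bound \eqref{Weylupper},'' preceded by the observation that $m_k=S_k-S_{k-1}$ is nondecreasing. The first half of your argument (the telescoping identity $\sum_{j=0}^{k-1}m_j=S_k$, verification of hypothesis \eqref{KaraCond} for the pair $\{\lambda_j\},\{m_j\}$, and the two applications of Lemma \ref{Karamata} with the correct directions for concave nondecreasing $\Phi$ and convex nonincreasing $\Psi$) is correct and matches what the authors intend.

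The genuine gap is the step you flag and then leave open: the passage from the increments $m_j$ (or $\widetilde m_j=\widetilde S_{j+1}-\widetilde S_j$) to the displayed sequence $w_j=\bigl(1+\tfrac2\nu\bigr)\frac{\pi^2 m j^{2/\nu}}{3n^{1+2/\nu}}$. A proof that ends with ``once the domination $m_j\le w_j$ is secured'' has not secured anything, and worse, that domination is \emph{false}: by the very convexity you cite, $\widetilde m_j=\widetilde S(j+1)-\widetilde S(j)\ge\widetilde S'(j)=w_j$ for every $j$, so equivalently $\sum_{j=0}^{k-1}w_j$ is a left Riemann sum lying \emph{below} $\widetilde S_k$, which is the wrong direction for hypothesis \eqref{KaraCond} with comparison sequence $\{w_j\}$. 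You correctly diagnose this tension, but your proposed escape via the pointwise inequality \eqref{KrGraphLaplacian} is only a hope: the obstruction lives entirely in the discretization of $\widetilde S$, and you give no indication of how \eqref{KrGraphLaplacian} would yield the strictly stronger eigenvalue bound $\sum_{j=0}^{k-1}\lambda_j\le\sum_{j=0}^{k-1}w_j$ that Karamata requires. What your set-up does prove rigorously is the Corollary with $w_j$ replaced either by the exact increments $\widetilde m_j$ or by the shifted values $w_{j+1}$ (the right Riemann sum, which dominates $\widetilde S_k$); the paper's one-line proof silently identifies these with $w_j$. So: same approach as the paper, sound up through the Karamata application, but the final comparison is deferred to a claim whose literal form is untrue, and the honest output of your argument is the index-shifted inequality rather than the one stated.
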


The statements in the Corollary are simply applications of Karamata's Lemma \ref{Karamata}
to the bound \eqref{Weylupper}.  Lemma \ref{Karamata}
can be applied directly to \eqref{latticeWeyl} with a slight improvement, but
the form is complicated.
Interesting choices for
$\Psi$ in \eqref{partitionbd} include $x \to e^{-t x}, t \ge 0$, which corresponds to the partition function of statistical physics, and $x \to (t - x)_+^p$, which when summed on the spectrum becomes its
Riesz mean.

Two simple examples are offered to illustrate Proposition \ref{lattice ex}.

\begin{exs}
\item
1.  A path with $n$ vertices is a one-dimensional graph with eigenvalues $\displaystyle \lambda_j=4\sin^2\frac{\pi j}{2n}$.  Therefore $\displaystyle \sum_{j=1}^{k-1}\lambda_j=(2k-1)\left(1-\frac{\rm sinc(\pi\frac{2k-1}{2n})}{\rm sinc(\pi\frac{1}{2n})}\right)$, which admits the asymptotic expansion
$\displaystyle \frac{1}{k}\sum_{j=1}^{k-1}\lambda_j=\frac{\pi^2}{3}\kappa^2+O(\kappa^3)$,
thereby proving the sharpness of the bound \eqref{Weylupper} for $\nu=1$.
Considering its Laplacian energy the lower bound \eqref{LEGestimate} yields $\displaystyle LE(G)\geq \frac{4n-4}{\pi}$. A simple upper bound is given by
\begin{equation*}
  LE(G)\leq 2\sum_{j=0}^{n/2}2-4\sin^2\frac{\pi j}{2n}=\frac{2\sin\frac{\pi (n+1)}{2n}}{\sin\frac{\pi}{2n}}
\end{equation*}
(for $n$ even, but a similar expression holds for $n$ odd).
This behaves like $\displaystyle \frac{4n}{\pi}$ for $n$ large, proving the sharpness of the lower bound \eqref{LEGestimate}.
\item
2.  Consider next a cycle with $n=2n'$ vertices, which embeds minimally
in $\mathfrak{Q}^\nu$ with $\nu=2$.
The example of the cycle is an
interesting
test case for Proposition \ref{lattice ex}, because it is in a sense only slightly two-dimensional, and
because it has many different realizations in $\mathfrak{Q}^2$,
for example either with many collinear neighbors or with none.
Its eigenvalues are $\lambda_0=0$, $\displaystyle \lambda_j'=4\sin^2\frac{\pi j'}{n}$, $j'=1\ldots n'-1$ with multiplicity $2$ and $\lambda_{n-1}=\lambda_{2n'-1}=4$. We consider
the sum over an even number of eigenvalues. Let $k=2k'+1$, $k'\in\mathbb{N}$. Then $\displaystyle \sum_{j=1}^{k-1}\lambda_j=8\sum_{j'=1}^{k'}\sin^2\frac{\pi j'}{n}$. Therefore
$$
\sum_{j=1}^{k-1}\lambda_j=2k\left(1-\frac{{\rm sinc}\left(\pi \frac k n \right)}
{{\rm sinc}(\frac{\pi}{n})}\right) = 2m \kappa \left(1-\frac{{\rm sinc}\left(\pi \kappa \right)}
{{\rm sinc}(\frac{\pi}{n})}\right),
$$
which for $1 \ll k \ll n$ agrees asymptotically with the upper bound
\eqref{Weylupper} for $\nu=1$.
\end{exs}

The next application of Theorem \ref{Krlemma} makes no assumption
on $G$ other than finiteness.

\begin{cor}\label{univKnbd}
Let $G$ be a finite graph on $n$ vertices, and let $\mathfrak{M}_0$ be any set of $n(k-1)$
(ordered) pairs of vertices $\{u,v\}$.
Then for $k < n$ the eigenvalues $\lambda_k$
of the graph Laplacian $H_G$ satisfy
\begin{equation}\label{Cor6}
\sum_{j=1}^{k-1}{\lambda_j} \le \frac{1}{2n} \sum_{\left\{u,v\right\} \in \mathfrak{M}_0}{\left(d_u + d_v + 2 a_{uv}\right)}.
\end{equation}
\end{cor}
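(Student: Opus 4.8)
The plan is to apply Theorem \ref{Krlemma} in the form of the simplified inequality \eqref{KrIneq}, using a purely discrete measure space together with a family of ``difference'' test vectors tailored to reproduce the quantity $d_u + d_v + 2a_{uv}$. Concretely, I would take $\mathfrak{M}$ to be the set of all ordered pairs $(u,v)$ of vertices equipped with the counting measure, and for each such pair set $f_{(u,v)} := \mathbf{e}_u - \mathbf{e}_v$. A direct computation from \eqref{weakLapDef} shows that the edge $[uv]$ (if present) contributes $|1-(-1)|^2 = 4a_{uv}$, each of the $d_u - a_{uv}$ edges joining $u$ to a third vertex contributes $1$, and likewise for $v$, so that $\langle H f_{(u,v)}, f_{(u,v)}\rangle = 4a_{uv} + (d_u - a_{uv}) + (d_v - a_{uv}) = d_u + d_v + 2a_{uv}$. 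Integrating this over the prescribed subset $\mathfrak{M}_0$ reproduces exactly the right-hand side of \eqref{Cor6} up to the normalizing factor $\frac{1}{2n}$.

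The key step is to evaluate the spectral-weight integrals $\int_{\mathfrak{M}} |\langle f_z, \mathbf{\psi}^{(j)}\rangle|^2\, d\sigma$. Expanding $|\psi^{(j)}_u - \psi^{(j)}_v|^2$ and summing over all ordered pairs gives $2n\,\|\mathbf{\psi}^{(j)}\|^2 - 2\,|\langle \mathbf{1}, \mathbf{\psi}^{(j)}\rangle|^2$. For every $j \ge 1$ the eigenvector $\mathbf{\psi}^{(j)}$ is orthogonal to the null vector $\mathbf{1}$, so this collapses to the single constant $2n$; for $j=0$ it vanishes, consistent with $\lambda_0 = 0$. This collapse to one constant, independent of $j$, is the feature that makes the averaged principle deliver a clean bound, and I expect it to be the creative heart of the argument.

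It then remains to verify the nonnegativity hypothesis that licenses passage from \eqref{kroeger-2} to \eqref{KrIneq}, namely $\int_{\mathfrak{M}_0}\langle f_z, f_z\rangle\, d\sigma \ge \sum_{j=0}^{k-1}\int_{\mathfrak{M}} |\langle f_z, \mathbf{\psi}^{(j)}\rangle|^2\, d\sigma$. Since $\|f_{(u,v)}\|^2 = 2$, the left side equals $2\,|\mathfrak{M}_0| = 2n(k-1)$, while by the previous step the right side equals $\sum_{j=1}^{k-1} 2n = 2n(k-1)$. These agree exactly, so the hypothesis holds with equality, which is precisely why the cardinality $|\mathfrak{M}_0| = n(k-1)$ is the correct choice. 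Substituting everything into \eqref{KrIneq} yields $2n\sum_{j=1}^{k-1}\lambda_j \le \sum_{\{u,v\}\in\mathfrak{M}_0}(d_u + d_v + 2a_{uv})$, and dividing by $2n$ gives \eqref{Cor6}. The main obstacle here is not analytic but combinatorial bookkeeping: one must get both identities (the quadratic-form value and the constant spectral weight $2n$) right without sign errors, and recognize that the balance $|\mathfrak{M}_0| = n(k-1)$ is forced, rather than merely convenient, by the nonnegativity requirement.
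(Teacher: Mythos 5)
Your proposal is correct and follows essentially the same route as the paper: the same test vectors $\mathbf{e}_u-\mathbf{e}_v$ over the set of ordered pairs with counting measure, the same computation $\langle H(\mathbf{e}_u-\mathbf{e}_v),\mathbf{e}_u-\mathbf{e}_v\rangle=d_u+d_v+2a_{uv}$, and the same collapse of the spectral weights to the constant $2n$ via orthogonality to $\mathbf{1}$ (the paper phrases this by working in the orthogonal complement of $\mathbf{1}$, while you note directly that the $j=0$ term vanishes — the same observation). The only quibble is that the nonnegativity requirement forces $|\mathfrak{M}_0|\ge n(k-1)$ rather than equality; equality is simply the optimal admissible choice.
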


\begin{remark}
Ideally, one would optimize the choice of $\mathfrak{M}_0$, whether by favoring vertices with low values
of $d_u$ or by choosing a subset where $a_{uv} = 0$ as often as possible.  For example, if there is a large
coloring subset, choosing pairs only from it will by definition
guarantee that $a_{uv} = 0$.  The extreme case of a graph
with a large coloring subset is the star graph on $n$ vertices, and it can be verified that for such graphs,
\eqref{Cor6} becomes an equality.  Yet in the other extreme case, of the complete graph $K_n$, \eqref{Cor6} also becomes an equality.
\end{remark}

\begin{proof}
It is helpful to apply
Theorem \ref{Krlemma} thinking of the Hilbert space as the
orthogonal complement of the constant vector ${\bf 1} = \sqrt{n} \, \mathbf{\psi}^{(0)}$.
That is, $\mathcal{H}$ consists of the vectors of mean $0$.  We
take $\mathfrak{M}$ as the set of all ordered pairs
$\{u,v\}$
of vertices, the labels $u,v$ each being identified with integers $1 \dots n$,
and in this case we can simply begin the sum in
\eqref{kroeger-2} with $j=1$.
We use the counting measure on the elements of $\mathfrak{M}$ or respectively of
$\mathfrak{M}_0$, a subset of $\mathfrak{M}$ to be chosen.
For each such pair,
define the vector $b_{u,v} := {\bf e}_u - {\bf e}_v$.
As before we calculate the quantities
appearing in \eqref{kroeger-2}, beginning with
$$
\left\langle{H b_{u,v}, b_{u,v}}\right\rangle = d_u + d_v + 2 a_{uv}.
$$
(This formula is easy to see
from \eqref{weakLapDef}
by considering separately the cases where $u$ and $v$ are
connected and where they are not connected.)  For any eigenvector $\mathbf{\psi}^{(\ell)}$ other than
for $\ell=0$, the orthogonality of $\mathbf{\psi}^{(\ell)}$
to $\mathbf{\psi}^{(0)} \propto {\bf 1}$ implies that
\begin{align}\label{simplification}
\int_{\mathfrak{M}}|\langle b_{u,v},\mathbf{\psi}^{(\ell)}\rangle|^2\,d \sigma &=
\sum_{u,v=1}^n{\left(|\mathbf{\psi}^{(\ell)}_u|^2 -
2 \Re(\mathbf{\psi}^{(\ell)}_u \, \overline{\mathbf{\psi}^{(\ell)}_v})
+ |\mathbf{\psi}^{(\ell)}_v|^2\right)}\nonumber
\\
&= 2 n \|\mathbf{\psi}^{(\ell)}\| = 2 n,
\end{align}
and therefore from \eqref{KrIneq} it follows that
\begin{equation}
2 n \sum_{j=1}^{k-1}\lambda_j
\le \sum_{\left\{u,v\right\} \in \mathfrak{M}_0}{\left(d_u + d_v + 2 a_{uv}\right)},
\end{equation}
provided that the coefficient of $\lambda_k$ coming from \eqref{kroeger-2} is nonnegative,
{\it i.e.}, we require
that $0 \le 2 |\mathfrak{M}_0| - 2 n(k-1)$ (again calling on \eqref{simplification}).
This establishes Corollary \ref{univKnbd}.
\end{proof}

Next we apply the same ideas to the renormalized Laplacian:

\begin{cor}\label{univKnbdC}
Let $G$ be a finite graph on $n$ vertices, and let $\mathfrak{M}_0$ be any set of $p$ pairs of
vertices $\{u,v\}$ with $\sum_{\mathfrak{M}_0}{(d_u+d_v)} \ge 4 (k-1) m$.  Then
the eigenvalues of the renormalized Laplacian $\hat{H}_G$ satisfy
\begin{equation}\label{chungupper}
\sum_{j=1}^{k-1}{\frak{c}_j} \le  \frac{1}{4 m} \sum_{\mathfrak{M}_0}{\left(d_u + d_v + 2 a_{uv}\right)},
\end{equation}
and
\begin{equation}\label{chungupper2}
\sum_{j=1}^{k-1}{\frak{c}_j^2} \le  \frac{1}{4 m} \sum_{\mathfrak{M}_0}
{\left(d_u+d_v + 4 a_{uv} + \sum_x{\frac{1}{d_x} \left(\frac{a_{x v} d_u}{d_v} -\frac{a_{x u} d_v}{d_u}\right)^2}\right)}.
\end{equation}
\end{cor}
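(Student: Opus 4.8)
The plan is to run the argument of Corollary~\ref{univKnbd} essentially verbatim, but to replace the unit counting measure and the difference vectors $\mathbf{e}_u-\mathbf{e}_v$ by data adapted to the degree weighting built into $\hat H$. Concretely, I would take $\mathfrak{M}$ to be the set of ordered pairs $(u,v)$ of vertices, equipped with the counting measure that assigns mass $d_u d_v$ to each pair, and use the test vectors $f_{u,v}:=d_u^{-1/2}\mathbf{e}_u-d_v^{-1/2}\mathbf{e}_v$. Since $\hat H=Deg^{-1/2}HDeg^{-1/2}$, its kernel is spanned by $\mathbf{\psi}^{(0)}=(2m)^{-1/2}(\sqrt{d_w})_w$, and one checks at once that $\langle f_{u,v},\mathbf{\psi}^{(0)}\rangle=d_u^{-1/2}\sqrt{d_u}-d_v^{-1/2}\sqrt{d_v}=0$, so every $f_{u,v}$ lies in the orthogonal complement of $\mathbf{\psi}^{(0)}$; exactly as in Corollary~\ref{univKnbd} this lets me apply Theorem~\ref{Krlemma} on that complement and begin all sums at $j=1$.

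For the first bound I would evaluate the three ingredients of \eqref{KrIneq}. Writing $Deg^{-1/2}f_{u,v}=d_u^{-1}\mathbf{e}_u-d_v^{-1}\mathbf{e}_v$ and using \eqref{weakLapDef}, the energy is $\langle \hat H f_{u,v},f_{u,v}\rangle=\langle H(Deg^{-1/2}f_{u,v}),Deg^{-1/2}f_{u,v}\rangle=\tfrac1{d_u}+\tfrac1{d_v}+\tfrac{2a_{uv}}{d_ud_v}$, which after multiplication by the weight $d_ud_v$ becomes exactly $d_u+d_v+2a_{uv}$; likewise $\langle f_{u,v},f_{u,v}\rangle=d_u^{-1}+d_v^{-1}$ weights up to $d_u+d_v$. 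The crucial point is the projection: summing $d_ud_v\,|\langle f_{u,v},\mathbf{\psi}^{(\ell)}\rangle|^2$ over all ordered pairs gives $4m\|\mathbf{\psi}^{(\ell)}\|^2-2\big(\sum_w\sqrt{d_w}\,\mathbf{\psi}^{(\ell)}_w\big)^2$, and the cross term vanishes precisely because $(\sqrt{d_w})_w\propto\mathbf{\psi}^{(0)}\perp\mathbf{\psi}^{(\ell)}$, leaving the $\ell$-independent constant $4m$. Feeding these into \eqref{KrIneq}, the requirement that the coefficient of $\frak{c}_k$ be nonnegative reads $\sum_{\mathfrak{M}_0}(d_u+d_v)\ge 4(k-1)m$, and dividing by $4m$ yields \eqref{chungupper}.

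For the sum of squares I would apply the same machinery to $M=\hat H^2$: it has the same eigenvectors $\mathbf{\psi}^{(\ell)}$, eigenvalues $\frak{c}_\ell^2$, hence the same projection constant $4m$ and the same nonnegativity condition. The only new computation is $\langle \hat H^2 f_{u,v},f_{u,v}\rangle=\|\hat H f_{u,v}\|^2=\sum_x d_x^{-1}\big(H(Deg^{-1/2}f_{u,v})\big)_x^2$. The entries $x=u$ and $x=v$ supply the diagonal part, which under the weight $d_ud_v$ produces $d_u+d_v+4a_{uv}$ together with a residual, while the entries $x\neq u,v$ give $\sum_{x\neq u,v}d_x^{-1}(a_{xv}/d_v-a_{xu}/d_u)^2$; multiplying by $d_ud_v$ recombines the residual diagonal pieces with these off-diagonal contributions into a single perfect square of the form $\sum_x d_x^{-1}(\cdots)^2$, namely the expression displayed in \eqref{chungupper2}.

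I expect the main obstacle to be the bookkeeping in this last step: verifying that the diagonal contributions at $x=u,v$ fuse with the off-diagonal ones to give one clean quadratic form under the weight $d_ud_v$, rather than leaving stray $a_{uv}^2/d_u$ and $a_{uv}^2/d_v$ remainders. Everything else is a transcription of Corollary~\ref{univKnbd}; the two non-routine facts I would isolate and check first are (i) that the weight $d_ud_v$ is exactly what converts the degree-reciprocal energies into the integer-looking numerators $d_u+d_v+2a_{uv}$, and (ii) that the normalization is the $\ell$-independent value $4m$, which hinges entirely on the kernel of $\hat H$ being spanned by $(\sqrt{d_w})_w$.
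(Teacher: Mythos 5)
Your proposal is correct and is essentially the paper's own proof: since every quantity in Theorem~\ref{Krlemma} is quadratic in the trial vector, your weighted measure $d_ud_v$ together with $f_{u,v}=d_u^{-1/2}\mathbf{e}_u-d_v^{-1/2}\mathbf{e}_v$ is just a reparametrization of the paper's unweighted choice $\mathfrak{b}_{u,v}=\sqrt{d_v}\,\mathbf{e}_u-\sqrt{d_u}\,\mathbf{e}_v=\sqrt{d_ud_v}\,f_{u,v}$, and all three ingredients (energy $d_u+d_v+2a_{uv}$, normalization $4m$ via orthogonality to $(\sqrt{d_w})_w$, and the $\hat H^2$ computation) come out identically. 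Your worry about the $x=u,v$ bookkeeping in the squared bound is unfounded: the stray $a_{uv}^2/d_u$ and $a_{uv}^2/d_v$ terms are exactly the $x=u$ and $x=v$ summands of $\sum_x d_x^{-1}(\cdots)^2$, so the fusion into a single quadratic form works as you hoped.
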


The final term in \eqref{chungupper2} is a measure of the deviation of $G$ from regularity.

\begin{proof}
We use Theorem \ref{Krlemma}, again choosing $\mathcal{H}$ as the
orthogonal complement of ${\bf 1}$, and
taking $\mathfrak{M}$ as the set of all pairs
$\{u,v\}$.  For each such pair, this time we
define the vector $\mathfrak{b}_{u,v} := \sqrt{d_v}{\bf e}_u - \sqrt{d_u}{\bf e}_v$.
As before we calculate the quantities on the
right side of \eqref{kroeger-2}, beginning with
\begin{align*}
\left\langle{H \mathfrak{b}_{v,w}, \ \mathfrak{b}_{v,w}}\right\rangle
&= \left\langle{H_G {\rm Deg}^{-1/2} \
\mathfrak{b}_{v,w}, {\rm Deg}^{-1/2}\ \mathfrak{b}_{v,w}}\right\rangle\\
&= \sum_{x, y}{\left(({\rm Deg}^{-1/2} \ \mathfrak{b}_{u,v})_x - ({\rm Deg}^{-1/2} \ \mathfrak{b}_{u,v})_y\right)^2}\\
&= a_{uv} \left(\sqrt{\frac{d_v}{d_u}}+\sqrt{\frac{d_u}{d_v}}\right)^2 + (d_u - a_{uv})\left(\frac{d_v}{d_u}\right)
+ (d_v - a_{uv})\left(\frac{d_u}{d_v}\right)\\
&=d_u+d_v + 2 a_{uv},
\end{align*}
just as in
Corollary \ref{univKnbd}.
This quantity is an upper bound for
$$
\sum_{j=1}^{k-1}{\frak{c}_j}\sum_{u,v}{|\left\langle{\ \mathfrak{b}_{v,w}, \mathbf{\psi}^{(j)}}\right\rangle|^2}.
$$
To evaluate the coefficient of the summand, recall that for $j>0$,
the eigenvectors $\mathbf{\psi}^{(j)}$ are orthogonal to $\mathbf{\psi}^{(0)} = {\rm Deg}^{1/2} {\bf 1}$.  Hence
$$
\sum_{u,v}{|\left\langle{\ \mathfrak{b}_{u,v}, \mathbf{\psi}^{(j)}}\right\rangle|^2} =
\sum_{u,v}{|d_v^{1/2} \mathbf{\psi}^{(j)}_u - d_u^{1/2} \mathbf{\psi}^{(j)}_v|^2} =
\sum_{u,v}{\left(d_v |\mathbf{\psi}^{(j)}_u|^2 + d_u |\mathbf{\psi}^{(j)}_v|^2\right)} =
4 m.
$$
The coefficient of $\lambda_k$
coming from this application of \eqref{kroeger-2} works out to be
$\sum_{\mathfrak{M}_0}{(d_u + d_v)} - 2 (k-1) m$.  It follows that if
this quantity is nonnegative, then
$$
\sum_{j=1}^{k-1}{{\frak c}_j} \le \frac{1}{4 m} \sum_{\mathfrak{M}_0}{\left(d_u + d_v + 2 a_{uv}\right)},
$$
as claimed.

For the sum of the squares, we instead calculate
$$
H Deg^{-1/2} \mathfrak{b}_{u,v} = \sqrt{d_u d_v} + a_{u v} \sqrt{\frac{d_u}{d_v}}+ a_{u v} \sqrt{\frac{d_v}{d_u}},
$$
from which the expectation value of $\hat{H}^2$ becomes
$$
d_u+d_v + 4 a_{uv} + \sum_x{\frac{1}{d_x} \left(\frac{a_{x v} d_u}{d_v} -\frac{a_{x u} d_v}{d_u}\right)^2},
$$
and the rest of the calculation goes as before.
\end{proof}

For the adjacency matrix, Theorem \ref{Krlemma} reduces to an elementary inequality for sums
of eigenvalues, but an inequality reflecting somewhat more of the graph structure emerges
for the sum of the $k$ smallest values of $\{\alpha_j^2\}$.  ({\it A priori} the selection
of the smallest squares is very different from the ordering
of $\{\alpha_j\}$.)

\begin{cor}\label{univKnbdA}
Let $G$ be a finite connected graph on $n$ vertices.  Then for $1 \le k < n-1$,
the eigenvalues  $\alpha_0 \ge \alpha_1 \ge \dots \ge \alpha_{n-1}$
of the adjacency matrix $A_G$ satisfy the elementary inequalities
\begin{align}\label{adj_1}
\sum_{j=0}^{n-k-1}{\alpha_j} &\ge k,\nonumber\\
\sum_{j=n-k}^{n-1}{\alpha_j} &\le -k.
\end{align}
Now let $\{\alpha_{\ell_j}\}$, $\ell = 0, \dots, n-1$ denote the eigenvalues $\alpha_j$
reordered by magnitude,
so that $|\alpha_{\ell_0}| \le |\alpha_{\ell_1}| \le \dots$.  Then for any set
$\mathfrak{M}_0$ of $n k$ ordered pairs of vertices,
\begin{equation}\label{adj_2}
\sum_{j=0}^{k-1}{\alpha_{\ell_j}^2} \le \frac{1}{2 n } \sum_{(u,v) \in \mathfrak{M}_0}{(d_u + d_v - 2 (A^2)_{u v})}.
\end{equation}
\end{cor}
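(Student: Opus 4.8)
The plan is to treat the two assertions by different tools: \eqref{adj_1} is a direct consequence of the ordinary min--max principle \eqref{minmax}, whereas \eqref{adj_2} calls for the averaged principle of Theorem~\ref{Krlemma} applied to the matrix $A^2$.

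For the elementary bounds \eqref{adj_1} I would use the normalized edge vectors $\phi^{(\ell)} := \frac{1}{\sqrt 2}(\mathbf{e}_{u_\ell} - \mathbf{e}_{v_\ell})$. A one-line calculation from the definition of $A$ gives $\langle A \phi^{(\ell)}, \phi^{(\ell)}\rangle = -a_{u_\ell v_\ell}$, so each edge contributes exactly $-1$. Choosing the edges $\{u_\ell,v_\ell\}$ pairwise vertex-disjoint (a matching) makes the $\phi^{(\ell)}$ orthonormal, and feeding them into \eqref{minmax} bounds the sum of the $k$ smallest eigenvalues by $\sum_{j=n-k}^{n-1}\alpha_j \le -k$. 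The complementary inequality $\sum_{j=0}^{n-k-1}\alpha_j \ge k$ then follows immediately from $\mathrm{Tr}(A)=\sum_j \alpha_j = 0$.

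For \eqref{adj_2} the key observation is that the numbers $\{\alpha_{\ell_j}^2\}$, ordered by magnitude, are precisely the eigenvalues of the positive-semidefinite matrix $M = A^2$ in increasing order. I would apply Theorem~\ref{Krlemma} to $A^2$, taking $\mathfrak{M}$ to be all ordered pairs $\{u,v\}$ with counting measure and the same test vectors $b_{u,v} := \mathbf{e}_u - \mathbf{e}_v$ used in Corollary~\ref{univKnbd}. The energy term evaluates cleanly, since $\langle A^2 b_{u,v}, b_{u,v}\rangle = \|A b_{u,v}\|^2 = d_u + d_v - 2(A^2)_{uv}$, which also shows incidentally that every summand on the right of \eqref{adj_2} is nonnegative. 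The coefficient of the next eigenvalue in \eqref{kroeger-2} is nonnegative precisely when $|\mathfrak{M}_0| \ge nk$, which is the hypothesis of the corollary; granting that the overlap integrals $\int_{\mathfrak{M}}|\langle b_{u,v},\psi^{(j)}\rangle|^2\, d\sigma$ reduce to the constant $2n$, dividing by $2n$ then yields \eqref{adj_2}.

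\textbf{The main obstacle} I anticipate is exactly that overlap reduction. Unlike the combinatorial Laplacian, for which $\mathbf{1}$ is a genuine eigenvector and Corollary~\ref{univKnbd} could be read off directly in $\mathbf{1}^\perp$, the constant vector is \emph{not} an eigenvector of $A^2$. A direct computation gives $\int_{\mathfrak{M}}|\langle b_{u,v},\psi^{(j)}\rangle|^2\, d\sigma = 2n - 2|\langle \mathbf{1},\psi^{(j)}\rangle|^2$, so the coefficients are not uniformly $2n$ and the clean bound does not simply drop out of Theorem~\ref{Krlemma} on the full space. I would circumvent this by passing to the compression $B := P A^2 P$ of $A^2$ to $\mathbf{1}^\perp$, noting that $b_{u,v}\in\mathbf{1}^\perp$ automatically; since the eigenvectors of $B$ are orthogonal to $\mathbf{1}$, the overlap integrals become exactly $2n$ and Theorem~\ref{Krlemma} gives the clean bound for the eigenvalues $\nu_j$ of $B$. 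Cauchy interlacing for a codimension-one compression then provides $\alpha_{\ell_j}^2 \le \nu_j$, so the bound transfers to the true eigenvalues of $A^2$. For \eqref{adj_1} the analogous obstacle is merely the existence of a size-$k$ matching: connectivity secures this only for $k=1$ (where the bound also follows from interlacing against an induced $K_2$, with eigenvalues $\pm 1$), so for general $k$ the argument requires the graph to carry $k$ disjoint edges.
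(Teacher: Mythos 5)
Your route to \eqref{adj_2} is correct, but it is genuinely different from the paper's at exactly the point you flagged. The paper does not pass to the compression of $A^2$ onto $\mathbf{1}^\perp$; instead it enlarges the index set $\mathfrak{M}$ by one extra element $\omega$ carrying the test vector $b_\omega=\mathbf{1}$ with weight $2$, so that $\sum_{u,v}|\langle\phi,\mathbf{e}_u-\mathbf{e}_v\rangle|^2+2|\langle\phi,\mathbf{1}\rangle|^2=2n\|\phi\|^2$ holds as an identity for \emph{every} $\phi\in\C^n$. Since $\omega$ is kept out of $\mathfrak{M}_0$, the energy term is untouched, while the overlap integrals over all of $\mathfrak{M}$ become exactly $2n$ for each normalized eigenvector with no orthogonality to $\mathbf{1}$ required; this absorbs into the measure precisely the defect $2|\langle\mathbf{1},\psi^{(j)}\rangle|^2$ that you computed. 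Your substitute --- compressing to $B=PA^2P$ on $\mathbf{1}^\perp$, where the $b_{u,v}$ already live, and then invoking Cauchy interlacing $\alpha_{\ell_j}^2\le\nu_j$ for a codimension-one compression --- is valid and yields the identical bound. The paper's tight-frame device has the advantage of treating $A$ and $A^2$ uniformly (it is also how the paper derives \eqref{adj_1}), while yours stays closer to the Laplacian argument of Corollary~\ref{univKnbd} and needs no augmentation of the measure space.

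For \eqref{adj_1} the two arguments diverge, and your caveat is substantive rather than a defect of your method. The paper again uses Theorem~\ref{Krlemma}: with $|\mathfrak{M}_0|=nk$ the left side of \eqref{kroeger-2} vanishes and one obtains $n\sum_{j=n-k}^{n-1}\alpha_j\le-\sum_{\mathfrak{M}_0}a_{uv}$, after which it asserts that $\mathfrak{M}_0$ can be filled entirely with connected ordered pairs; that requires $nk\le 2m$, just as your matching argument requires $k$ pairwise disjoint edges, and neither condition follows from connectivity alone for all $k<n-1$. Indeed \eqref{adj_1} fails as stated for the star $K_{1,3}$ with $k=2$: the adjacency spectrum is $\sqrt3,0,0,-\sqrt3$, so $\alpha_2+\alpha_3=-\sqrt3>-2$. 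So the obstacle you identify is real, and an extra hypothesis of the kind you name (a matching of size $k$, or the paper's implicit $nk\le 2m$) is actually needed. Apart from this shared caveat, your computation $\langle A\phi^{(\ell)},\phi^{(\ell)}\rangle=-a_{u_\ell v_\ell}$ and the reduction of the first line of \eqref{adj_1} to the second via $\mathrm{Tr}\,A=0$ agree with the paper's logic.
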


(If the graph is not assumed connected, then $k$ should be replaced by $\min(k, m)$
in \eqref{adj_1}.)  We note that \eqref{adj_1} and \eqref{adj_2} become equalities for complete graphs.

\begin{proof}
The two statements in \eqref{adj_1}
are equivalent, because $\textrm{tr} \,A = 0$.  We
choose to prove the second
statement because it fits more comfortably
the schema of Theorem \ref{Krlemma}.
In this instance the Hilbert
space $\mathcal{H}$ is all of $\C^\nu$, and the set
$\mathfrak{M}$ includes all the pairs
$\{u,v\}$ and one additional element which we shall call $\omega$.
As before, for each pair of vertices we
define the vector $b_{u,v} := {\bf e}_u - {\bf e}_v$,
supplemented with the constant vector $b_{\omega} = {\bf 1}$.
For any vector $\phi$,
$|\left\langle\phi, {\bf 1}\right\rangle|^2 = |\sum_u \phi_u|^2$, and
with a calculation similar to that of the proof of Corollary \ref{univKnbd},
we find that
\begin{equation}
\sum_{u,v}{|\left\langle\phi, {\bf e}_u - {\bf e}_v \right\rangle|^2} +
2 |\left\langle\phi, {\bf 1}\right\rangle|^2 = 2 n \|\phi\|^2.
\end{equation}
We calculate that
$$
\left\langle A ({\bf e}_u - {\bf e}_v), {\bf e}_u - {\bf e}_v\right\rangle = - 2a_{uv}.
$$
and recall
$$
\left\langle A {\bf 1}, {\bf 1}\right\rangle = \sum_u{d_u} = 2 m,
$$
cf. \eqref{edgetrace}.
If $|\mathfrak{M}_0| = n k < n (n-1)$, then the quantity coming from the
left side of
\eqref{kroeger-2} vanishes, and we can conclude that
$$
n \sum_{j=n-k}^{n-1}{\alpha_j} \le -  \sum_{\mathfrak{M}_0}{a_{uv}}.
$$
For any $k < n-1$ we can in fact always find a set $\mathfrak{M}_0$ of size $n k$, and we
may furthermore preferentially include pairs $\{u,v\}$
that are connected before pairs that are not
connected.  Thus the upper bound for the sum of eigenvalues
is $-k$ (unless $k \ge m$,
which does not occur for connected graphs).  The result is
$$
\sum_{j=n-k}^{n-1}{\alpha_j} \le -k.
$$
as claimed.

Applying Theorem \ref{Krlemma} to the square of $A$ will give a bound on the sum of the $k$ smallest
values of $|\alpha_j|^2$.  We calculate that
\begin{align*}
\|A ({\bf e}_u - {\bf e}_v) \|^2 &= \sum_x{(a_{x u} - a_{x v})^2 }\\
&= \sum_x{(a_{x u} + a_{x v} - 2 a_{u x} a_{x v}) }= d_u+d_v - 2 (A^2)_{u v},
\end{align*}
and with the same condition that $|\mathfrak{M}_0| = n k < n (n-1)$, we
sum to obtain \eqref{adj_2}.
\end{proof}

\appendix
\section{Spectral analysis of the graph $G_p$.}

We consider a graph $G_p$ with $n$ vertices such that $p$ vertices, $p=1,\ldots, n-1$, are 
each the center of a star graph with $n$ vertices, and the centers
of the stars are all connected to one another.
When $p=1$, $G_p$ is a star graph. When, $p=n-1$ it is the complete graph. We introduce the following
notation: Let $I_p,0_p$ the $p\times p$ identity matrix and zero matrix, respectively. Let $J_{r,s}$ be the $r\times s$ matrix whose entries are all equal to $1$.
Let $\vec{1}_p$ and $\vec{0}_p$ be the $p$ dimensional vectors with all entries
equal to $1$ and $0$, respectively. Note that $J_{r,s}$ has the properties
that $\displaystyle J_{r,s}\vec{1}_s=s\vec{1}_r$ and $J_{r,s}J_{s,r}=sJ_{r,r}$. We
recall that $\mathbf{e}_k$, $k=1,\ldots, n$
denote the standard orthornormal basis vectors of $\mathbb{R}^n$. The positive graph
Laplacian $H_p$, the normalized Laplacian $\hat{H}_p$,
and the adjacency matrix $A_p$ of $G_p$ are given by
\begin{equation}\label{H-p}
   H_p=\left(
         \begin{array}{cc}
           pI_{n-p} & -J_{n-p,p} \\
           -J_{p,n-p} & nI_p-J_{p,p} \\
         \end{array}
       \right),
\end{equation}

\begin{equation}\label{L-p}
   \hat{H}_p=\left(
         \begin{array}{cc}
           I_{n-p} & -c_{n,p}\,J_{n-p,p} \\
           -c_{n,p}\,J_{p,n-p} & \frac{n}{n-1}\,I_p-\frac{1}{n-1}\,J_{p,p} \\
         \end{array}
       \right),
\end{equation}
where $\displaystyle c_{n,p}=p^{-\frac1{2}}(n-1)^{-\frac1{2}}$, and
\begin{equation}\label{A-p}
   A_p=\left(
         \begin{array}{cc}
           0_{n-p} & J_{n-p,p} \\
           J_{p,n-p} & J_{p,p}-I_p \\
         \end{array}
       \right).
\end{equation}

 The eigenspaces $H_p$ and the other operators can be represented as follows:
\newpage

\small{\begin{tabular}{|l|c|c|c|}
  \hline
  & Eigenvalue & Multiplicity & Eigenvectors/eigenspaces \\

  \hline
  &  &&  \\
   $H_p$ & $0$  & $1$ & $\vec{1}_n$\\
    & $p$ &$n-p-1$ & $\displaystyle \mathbf{\psi}=\sum_{k=1}^{n-p}v_k\mathbf{e}_k$ such that $\displaystyle\sum_{k=1}^{n-p}v_k=0$. \\
     &  &&  \\
      & $n$ &$p$ & $\displaystyle \mathbf{\psi}=\sum_{k=n-p+1}^{n}v_k\mathbf{e}_k$ such that $\displaystyle\sum_{k=n-p+1}^{n}v_k=0$ and \\
      &  &&  \\
      &  && $\displaystyle \mathbf{\psi}=\frac1{\sqrt{np(n-p)}}\left(
                                        \begin{array}{c}
                                          p\, \vec{1}_{n-p}\\
                                          (p-n)\,\vec{1}_p \\
                                        \end{array}
                                      \right)
       $.\\
      &  &&  \\
  \hline
   &  &&  \\
   $\hat{H}_p$ & $0$  & $1$ & $\displaystyle\frac1{\sqrt{p(2n-p-1)}}\left(
                                        \begin{array}{c}
                                          \sqrt{p}\, \vec{1}_{n-p}\\
                                          \sqrt{n-1}\,\vec{1}_p \\
                                        \end{array}
                                      \right)$\\
    &  &&  \\
    & $1$ &$n-p-1$ & $\displaystyle \mathbf{\psi}=\sum_{k=1}^{n-p}v_k\mathbf{e}_k$ such that $\displaystyle\sum_{k=1}^{n-p}v_k=0$. \\
     &  &&  \\
      & $\displaystyle \frac{n}{n-1}$ &$p-1$ & $\displaystyle \mathbf{\psi}=\sum_{k=n-p+1}^{n}v_k\mathbf{e}_k$ such that $\displaystyle\sum_{k=n-p+1}^{n}v_k=0$. \\
      &  &&  \\
      & $\displaystyle \frac{2n-1-p}{n-1}$ &$1$& $\displaystyle\frac1{\sqrt{p(2n-p-1)}}\left(
                                        \begin{array}{c}
                                          \frac{\sqrt{p(n-1)}}{\sqrt{n-p)}}\, \vec{1}_{n-p}\\
                                          -\,\frac{\sqrt{n-p}}{\sqrt{n-1)}}\,\vec{1}_p \\
                                        \end{array}
                                      \right)$.\\
      &  &&  \\
    \hline
\end{tabular}}
\small{\begin{tabular}{|l|c|c|c|}
  \hline
   &  &&  \\
   $A_p$ & $0$  & $n-p-1$ & $\displaystyle \mathbf{\psi}=\sum_{k=1}^{n-p}v_k\mathbf{e}_k$ such that $\displaystyle\sum_{k=1}^{n-p}v_k=0$.\\

      & $-1$ &$p-1$ & $\displaystyle \mathbf{\psi}=\sum_{k=n-p+1}^{n}v_k\mathbf{e}_k$ such that $\displaystyle\sum_{k=n-p+1}^{n}v_k=0$. \\
      &  &&  \\
      & $\displaystyle \rho_{+}=\frac{p-1+d_{n,p}}{2}$ &$1$& $\displaystyle\left(
                                        \begin{array}{c}
                                           \vec{1}_{n-p}\\
                                          \frac{\rho_{+}}{p}\,\vec{1}_p \\
                                        \end{array}
                                      \right)$.\\
      &  &&  \\
      & $\displaystyle \rho_{-}=\frac{p-1-d_{n,p}}{2}$ &$1$& $\displaystyle\left(
                                        \begin{array}{c}
                                           \vec{1}_{n-p}\\
                                          \frac{\rho_{-}}{p}\,\vec{1}_p \\
                                        \end{array}
                                      \right)$.\\
      &  &&  \\
  \hline
\end{tabular}}
\\

Here $\displaystyle d_{n,p}=\sqrt{(p-1)^2+4p(n-p)}$ and the eigenvectors for $A_p$ corresponding to $\displaystyle \rho_{\pm}$ are not normalized.

{\bf Acknowledgments}  We wish to acknowledge the assistance of Mr. Thomas Boutin
for numerical studies of some of the inequalities reported here.
E.H. is also grateful to the \'Ecole Polytechnique F\'ed\'erale de Lausanne for
hospitality that supported this collaboration.



\begin{thebibliography}{999}

\bibitem{BeBe}
{\sc Beckenbach, B. and Bellman, R.}, {\em Inequalities}.
2nd revised printing, Springer 1965, \S34, p.77

\bibitem{BiLeSt}  {\sc B{\i}y{\i}ko\u{g}lu, T.,  Leydold, J., and Stadler, P. F.},
{\em Laplacian Eigenvectors of Graphs,
Perron-Frobenius and Faber-Krahn Type Theorems},
Springer Lecture Notes in Mathematics {\bf 1915}, Springer-Verlag, Berlin, Heidelberg, New York, 2007.

\bibitem{Bol}  {\sc Bollob\'as, B.}, {\em Modern graph theory},
Graduate Texts in Mathematics {\bf 184}.  New York, Springer-Verlag, 1998.


\bibitem{Chu}
{\sc Chung, F.~R.~K.},
{\em Spectral Graph Theory},
Conference Board of the Mathematical Sciences Regional Conference Series in
Mathematics {\bf 92}, Amer. Math. Soc., Providence, 1997.

\bibitem{CvRoSi1}  {\sc Cvetkovi\'c, D.,  Rowlinson, P., and Simi\'c, S.},
{\em Eigenspaces of Graphs},
Cambridge: Cambridge University Press, 1997.

\bibitem{CvRoSi2}  {\sc Cvetkovi\'c, D.,  Rowlinson, P., and Simi\'c, S.},
{\em An Introduction to the Theory of Graph Spectra},
{\bf 75}.
Cambridge: Cambridge University Press, 2010.

\bibitem{Die}  {\sc Diestel, R.},  {\em Graph theory}, Graduate texts in Mathematics {\bf 173} (4th Ed.).  New York:  Springer-Verlag, 2010.

\bibitem{Fie}
{\sc Fiedler, M.},
{\em Algebraic connectivity of graphs},
{Czechoslovak Mathematical Journal, Vol. 23} (1973), 298--305.
Persistent URL: {\scriptsize http://dml.cz/dmlcz/101168}

\bibitem{GuTr}  {\sc  Gutman, I. and Trinajsti\'c, N.},
{\em Graph theory and molecular orbitals. Total $\pi$-
electron energy of alternant hydrocarbons,}
{Chem. Phys. Lett. 17} (1972), 535--538.

\bibitem{GuZh}  {\sc Gutman, I. and Zhou, B.},
{\em Laplacian energy of a graph,}
{Lin. Alg. Appl. 414} (2006), 27--39.

\bibitem{HaSt} {\sc Harrell II, E. M. and Stubbe, J.}, {\em On trace identities
and universal eigenvalue estimates for some partial differential
operators},  {Trans.\ Amer.\ Math.\ Soc.}  {349} (1997), 1797--1809.

\bibitem{HaSt2}
{\sc Harrell II, E. M. and Stubbe, J.},
{\em Trace Identities for Commutators, with Applications to the
Distribution of Eigenvalues},
{Trans.\ Amer.\ Math.\ Soc.} S 0002-9947(2011)05252-9

\bibitem{HaSt3}
{\sc Harrell II, E. M. and Stubbe, J.},
{\em Inequalities for Riesz means and some consequences for graph spectra},
manuscript in prep.

\bibitem{Kro}  {\sc Kr\"oger, P.},
{\em Upper bounds for the Neumann eigenvalues on a bounded domain in Euclidean space}
J. Funct. Analysis 106(1992), 353-357. MR1165859 (93d:47091)

\bibitem{LiYa} {\sc Li, P. and Yau, S.-T.},
{\em On the Schr\"odinger equation and
the eigenvalue problem}. \
Commun.\ Math.\ Phys. 88(1983)309--318.

\bibitem{LiTh} {\sc Lieb E.H. and Thirring, W.},
{\em Inequalities for the moments of the eigenvalues of
the Schr\"{o}dinger Hamiltonian and their relation to Sobolev
inequalities. Studies in Math. Phys., Essays in Honor of Valentine
Bargmann}. Princeton, 269-303 (1976)

\bibitem{Pol}
{\sc P\'{o}lya, G.},
{\em On the eigenvalues
of vibrating membranes}
Proc.\ London Math.\ Soc. 11(1961)419--433.

\bibitem{ReSiIV}  {\sc Reed, M. and Simon, B.},
{\em Methods of Modern mathematical Physics.  IV:  Analysis of Operators}.
New York: Academic Press, 1978.

\bibitem{Zho}
{\sc  Zhou, B.},
{\em On sum of powers of the Laplacian eigenvalues of a graph,}
{Lin. Alg. Appl. 429} (2008), 2239--2246.

\end{thebibliography}
\end{document}